\newcommand{\calV}{\mathcal{V}}
\newcommand{\calD}{\mathcal{D}}
\newcommand{\calS}{\mathcal{S}}
\newcommand\halfopen[2]{\ensuremath{[#1,#2)}}
\DeclareMathOperator*{\argmin}{arg\,min}
\crefname{Remark}{Remark}{Remark}
\def\ccm{Center for Computational Mathematics, Flatiron Institute, Simons Foundation,
  New York, New York 10010}
\def\nyu{Courant Institute of Mathematical Sciences,
  New York University, New York, New York 10012}
\title{A lightweight, geometrically flexible fast algorithm for the
evaluation of layer and volume potentials}
\author{Fredrik Fryklund%
  \thanks{\ccm\, \& \nyu\,
  ({\tt fredrik.fryklund@cims.nyu.edu}).}
    \and
Leslie Greengard%
  \thanks{\ccm\, \& \nyu\,
 ({\tt lgreengard@flatironinstitute.org}).}
     \and
Shidong Jiang%
  \thanks{\ccm\,
    ({\tt sjiang@flatironinstitute.org}).}
     \and
Samuel Potter%
  \thanks{\nyu\, \& Coreform\, 
  ({\tt sam@coreform.com}).}
    } 
\begin{document}
\maketitle
\begin{abstract}
Over the last two decades, several fast, robust, and high-order accurate methods
have been developed for solving the Poisson equation in complicated geometry
using potential theory. In this approach, rather than discretizing the 
partial differential equation itself, 
one first evaluates a volume integral to account for the source distribution within
the domain, followed by solving a boundary integral equation to impose
the specified boundary conditions. 
  
Here, we present a new fast algorithm which is easy to
implement and compatible with virtually any discretization technique, including
unstructured domain triangulations, such as those used in 
standard finite element or finite volume methods. 
Our approach combines earlier work on potential 
theory for the heat equation, asymptotic analysis, the nonuniform fast 
Fourier transform (NUFFT), and the dual-space multilevel kernel-splitting 
(DMK) framework.
It is {insensitive to flaws in the triangulation}, 
permitting not just nonconforming elements, but 
arbitrary aspect ratio triangles, 
gaps and various other degeneracies. On a single CPU core, the 
scheme computes the solution at a rate comparable to that of the 
fast Fourier transform (FFT) in 
work per gridpoint.
\end{abstract}
\section{Introduction}
In this paper, we consider the solution of the Poisson equation
\begin{align}
 \label{eq:poissoneq}
-\Delta u(\bm{x}) &= f(\bm{x}) \quad \text{for }\bm{x} \in \Omega,
\end{align}
where $\Omega \subset \mathbb{R}^{2}$ is either an interior or exterior
domain,
subject to either Dirichlet or Neumann boundary conditions:
\begin{equation}
\label{dbc}
u(\bm{x}) = g(\bm{x}) \quad \text{for } \bm{x} \in \partial\Omega,
\end{equation}
or
\begin{equation}
\label{nbc}
\frac{\partial u}{\partial \nu}(\bm{x}) = 
g(\bm{x}) \quad \,\text{for } \bm{x} \in \partial\Omega,
\end{equation}
respectively.
Here, $u$ is an unknown function, 
$\frac{\partial u}{\partial \nu}$ denotes its outward normal derivative,
$f$ is a smooth source density in $\Omega$, and
$g$ is the specified boundary data.
We will focus on integral equation methods, 
and will seek a representation
of the solution as a combination of volume and layer potentials.
The volume potential $\mathcal{V}[f](\bm{x})$ is defined by 
\begin{equation}
\label{eq:volpotposgreen}
    \mathcal{V}[f](\bm{x}) \equiv \int_{\Omega}\! G(\bm{x}-\bm{x'})f(\bm{x'})\,\mathrm{d}\bm{x'}
\quad \text{for }\bm{x}\in\Omega,
\end{equation}
where 
\begin{equation}
\label{eq:GreenPoisson}
    G(\bm{x}) = 
    -\frac{1}{2\pi}\log\|\bm{x}\|,
\end{equation}
is the fundamental solution for the Poisson equation in two dimension,
with $\|\bm{x}\|$ denoting the Euclidean norm of $\bm{x}\in\mathbb{R}^{2}$. 
Single and double layer potentials are defined by
\begin{equation}
\label{eq:slpot}
    \mathcal{S}[\sigma](\bm{x}) \equiv \int_{\partial\Omega} \! G(\bm{x}-\bm{x'}) \sigma(\bm{x'})\,\mathrm{ds}_{\bm{x'}} 
\quad \text{for }\bm{x}\in\Omega,
\end{equation}
and
\begin{equation}
\label{eq:dlpot}
    \mathcal{D}[\mu](\bm{x}) \equiv \int_{\partial\Omega}\!
\frac{\partial G}{\partial {\nu}_{\bm{x'}}} (\bm{x}-\bm{x'})
\mu(\bm{x'})\,\mathrm{ds}_{\bm{x'}}
\quad \text{for }\bm{x}\in\Omega,
\end{equation}
respectively. 
Here, $\bm{\nu}(\bm{x'})=(\nu_{1}(\bm{x'}),
\nu_{2}(\bm{x'}))$ 
is the outward unit normal at a point $\bm{x'}\in\partial\Omega$, and
$\frac{\partial G}{\partial {\nu}_{\bm{x'}}}(\bm{x}-\bm{x'})$ 
denotes the normal derivative of the fundamental solution at 
$\bm{x'}$.
The functions $\sigma(\bm{x'})$ and $\mu(\bm{x'})$ are supported on the 
boundary $\partial \Omega$, and will be referred to as single and double 
layer densities. Note that 
$\mathcal{S}[\sigma](\bm{x})$ and $\mathcal{D}[\mu](\bm{x})$ define
harmonic functions in $\Omega$ while 
$\mathcal{V}[f](\bm{x})$ satisfies the Poisson equation.
Letting 
$\bm{x} = \bm{x'} \pm r \bm{\nu}(\bm{x'})$, 
it is well-known that
the double layer potential satisfies the jump relations
\begin{equation} 
\label{dlpjump}
\lim_{r \rightarrow 0^+}
\calD[\mu](\bm{x'} \pm r \bm{\nu}(\bm{x'}) ) 
 =
\calD[\mu](\bm{x'}) \pm \frac12 \mu(\bm{x'}).
\end{equation}

For the interior Dirichlet problem in a simply connected region,
standard potential theory \cite{guenther1988partial,kelloggpot,mikhlin,stakgold} suggests a representation for the solution $u$ of the form
\begin{equation}
    u(\bm{x}) = \mathcal{V}[f](\bm{x}) + \mathcal{D}[\mu](\bm{x})
\quad \text{for }\bm{x}\in\Omega,
\label{dir_rep}
\end{equation}
where $\mu$ is unknown and to be determined by the boundary condition.
For the Neumann problem, we will use Green's representation formula:
\begin{equation}
    u(\bm{x}) = \mathcal{V}[f](\bm{x}) + 
\mathcal{S} \left[\frac{\partial u}{\partial {\nu}} \right] (\bm{x})
- \mathcal{D}[u] (\bm{x})
\quad \text{for }\bm{x}\in\Omega.
\label{neu_rep}
\end{equation}
Taking the limits of the representations 
\eqref{dir_rep} and \eqref{neu_rep}
as $\bm{x}$ approaches a point on the boundary leads to boundary integral equations,
which will be described in \cref{s:bie}.

An important feature of the potential theory approach is that 
$\mathcal{V}[f](\bm{x})$ is
an integral transform over the volumetric data; that is, no 
unknowns are introduced in the domain interior. 
The difference $v(\bm{x})= u(\bm{x}) - \mathcal{V}[f](\bm{x})$ is
harmonic (satisfying the Laplace equation). Thus, one can think of 
the layer potential contributions
in \eqref{dir_rep} and \eqref{neu_rep} as representing a {\em harmonic 
correction} to $\mathcal{V}[f](\bm{x})$, needed to enforce \eqref{dbc} or
\eqref{nbc}.  Any ``particular solution" that satisfies the 
inhomogeneous equation \eqref{eq:poissoneq} can be used in place of 
$\mathcal{V}[f](\bm{x})$.

We do not seek to review the literature here, but 
briefly summarize the various integral equation methods that are currently
available to put our new solver in context.
First, if the right-hand side $f(\bm{x})$ is not too complicated, 
one can attempt to find a 
particular solution $F(\bm{x})$ such that 
\begin{equation}
 -\Delta F(\bm{x}) = f(\bm{x}). 
\label{antilapdef}
\end{equation}
The function $F(\bm{x})$ in \eqref{antilapdef} is called an {\em anti-Laplacian} of $f$ and one can represent $u(\bm{x})$ in the form
$u(\bm{x})= v(\bm{x}) + \mathcal{V}[f](\bm{x})$ where $v(\bm{x})$
and represented via layer potentials
(see \cref{s:bie}).
Using a global anti-Laplacian in this manner is sometimes called the
{\em dual reciprocity method} \cite{brebbia}.
Unfortunately, it is only practical when $f(\bm{x})$ has a 
simple analytical representation.
To address problems where the volumetric data $f(\bm{x})$ is represented 
on a more complex data structure, fast algorithms are required, as the naive
cost of computing a volume integral with $N$ discretization points at the
same $N$ locations is of the order $O(N^2)$.
For uniform grids in complicated geometry, careful attention to finite difference 
approximations at points near the domain boundary was shown to permit fast solvers based on the fast Fourier transform (FFT) to compute volume potentials with the order of accuracy
of the underlying finite difference approximation 
\cite{Mayo,MGM}. These are related to immersed boundary 
and immersed interface methods \cite{iim_leveque,peskin_2002}.
Subsequently, {\em adaptive} fast algorithms for the direct computation of 
volume integrals began to emerge in the 1990s (see, for example, 
\cite{brandt1990jcp,vfmm,greengard1996direct,langston2011,malhotra_biros_2015}). 
These methods generally assume the right-hand side is provided on an adaptive
quadtree or octtree data structure (for 2D and 3D problems, respectively) 
without a complicated boundary. 

\begin{remark}
We will refer to fast algorithms for computing volume potentials 
on adaptive quad or octtrees (but without a complicated boundary)
as {\em box codes}. 
\end{remark}

When the source distribution in an irregular region is nonuniform,
at least three distinct approaches are under active investigation. These
include: (1) function extension methods that enable 
the straightforward application of box codes 
\cite{askham2017,bruno,fryklund2018partition,fryklund2020integral,epstein2023accurate,fg2023,STEIN2016252,STEIN2017155}, (2)
composite or overlapping grid methods which 
use a box code for leaf nodes away from the boundary, but a different data 
structure near the boundary
\cite{shravan_hai_volint,functionintension},
and (3) domain triangulation methods, which modify the fast multipole method
(FMM) but do not make use of box codes at all.
The latter approach has the advantage of making the solver compatible with the
discretizations used in finite element and finite volume methods, but 
has received less attention from the integral equation 
community. An early exception is \cite{russostrain} which coupled the FMM
with adaptive quadrature to compute $\mathcal{V}[f](\bm{x})$.
Recently, Shen and Serkh \cite{serkhvolint} 
developed a high-order accurate FMM that computes an 
anti-Laplacian on each triangle and enforces global continuity by means 
of layer potentials on triangle boundaries. This extends the box code
approach of \cite{greengard1996direct} to general geometries with good 
performance.
See also the recent interpolatory method of \cite{anderson2}.

Since there are now many linear
scaling schemes for computing volume integrals, the reader might well ask 
why we seek to develop a new approach. The answer is that
there is still a need for a fast solver that is
{\em easy to implement}, {\em robust},
{\em highly efficient}, and {\em compatible with virtually any discretization}.
In this paper, we attempt to
meet these criteria, achieving speeds near that of the FFT in work per gridpoint.
Our method, illustrated using unstructured triangulations, 
does not make use of FMM acceleration.
Instead, we blend the nonuniform FFT (NUFFT) with ``kernel splitting" and 
asymptotics, extending the 
dual-space multilevel kernel-splitting 
(DMK) framework
of \cite{dmk} to complicated geometries.  Moreover,  
we show that both volume and layer potentials can be evaluated 
accurately with the new scheme in a manner that is
insensitive to flaws in the triangulation, 
permitting nonconforming and arbitrary aspect ratio elements, 
gaps and various other degeneracies 
(see \cref{fig:exrobustmesh}).

An essential ingredient in our method is the use of asymptotic
expansions, described in \cref{s:local}. 
For layer potentials, this extends the
ideas presented 
in \cite{gs:fhp,li2009sisc,strainfpt,wang2019acom}.
Similar asymptotic analysis 
has also been carried out by Beale {\em et al.}
(see, for example, 
\cite{bealetlupova2024,bealetlupova2024b,beale2016}), although used
in a somewhat different manner. 
As far as we are aware, the use of asymptotic expansions for volume
integrals, as described below, has not been considered before.
(See also \cref{rmk:asymp}.)
\section{Mathematical preliminaries} \label{sec-preliminaries}
Our fast algorithm for the evaluation of volume and layer potentials is
naturally motivated by first
observing that the volume integral $\calV[f](\bm{x})$ in
\cref{eq:volpotposgreen} can be computed
as the steady state limit of the heat equation \cite{friedman1964,pogorzelski}
\begin{equation}
 u_t(\bm{x}) -  \Delta u(\bm{x}) = f(\bm{x}). 
\label{eq:heat}
\end{equation}
We may view this limit as the solution at time $t=0$ for the equation
\eqref{eq:heat} with zero initial data at time $t= -\infty$.
Using the fundamental solution for the heat equation 
in two dimensions
\[ K(\bm{x},t) = \frac{e^{-\|\bm{x}\|^2/4t}}{4 \pi t}, \]
we write this alternative representation as a {\em volume heat potential}
(where large time corresponds to a large time in the past):
\begin{equation}
\calV[f](\bm{x}) = 
\int_0^{\infty}\! \int_\Omega\!  \frac{e^{-\|\bm{x-x'}\|^2/4t}}{4 \pi t} \, 
f(\bm{x'}) \,\mathrm{d}\bm{x'} \, \mathrm{d}t.
\label{heatequiv}
\end{equation}
This is the function we will compute but divided into three contributions.  We let 
\begin{equation}
\mathcal{V}[f](\bm{x}) =  \mathcal{V}_L[f](\bm{x})  + 
\mathcal{V}_{NH}[f](\bm{x})  + \mathcal{V}_{FH}[f](\bm{x}) \, ,
\label{heatdecomp}
\end{equation}
where 
\begin{equation}
  \begin{aligned}
    \mathcal{V}_L[f](\bm{x}) &=  
                               \int_0^{\delta_1}\! \int_\Omega\!
                               \frac{e^{-\|\bm{x-x'}\|^2/4t}}{4 \pi t} \, f(\bm{x'}) \,\mathrm{d}\bm{x'} \, \mathrm{d}t
                               , 
                               \quad \mathcal{V}_{NH}[f](\bm{x}) \! \! \! \! &=  
                                                                               \int^{\delta_2}_{\delta_1} \! \int_\Omega \!
                                                                               \frac{e^{-\|\bm{x-x'}\|^2/4t}}{4 \pi t} \, f(\bm{x'}) \,\mathrm{d}\bm{x'} \, \mathrm{d}t, \\
    \mathcal{V}_{FH}[f](\bm{x}) &=  
                                  \int^{\infty}_{\delta_2} \! \int_\Omega \!
                                  \frac{e^{-\|\bm{x-x'}\|^2/4t}}{4 \pi t} \, f(\bm{x'}) \,\mathrm{d}\bm{x'} \, \mathrm{d}t. \\
    \label{heatdecompdefs}
  \end{aligned}
\end{equation}

$\calV_L$ accounts for contributions that are {\em local} in time,
$\calV_{NH}$ accounts for contributions from the {\em near history}, and 
$\calV_{FH}$ accounts for contributions from the {\em far history}.
Such a decomposition was proposed in 
\cite{gs:fhp} for the rapid evaluation of layer heat potentials, 
and used in the manner suggested above in \cite{strainfpt} for harmonic 
potentials of the form
\eqref{eq:slpot} and \eqref{eq:dlpot}. 
In those papers, 
the near and far history were treated together as the ``history" part:
\begin{equation}
\label{historysub}
\mathcal{V}_H[f](\bm{x}) = 
\mathcal{V}_{NH}[f](\bm{x})  + \mathcal{V}_{FH}[f](\bm{x}) \, .
\end{equation}

\begin{remark}
The DMK framework of
\cite{dmk} was
introduced as an alternative to box codes (among other things). It is a
hierarchical, adaptive, Fourier-based method that, in the present context,
can be viewed as using $O(\log N)$ levels of subdivision with respect to the 
time variable together with an adaptive quadtree data structure.
Our work here combines kernel splitting 
with asymptotics to evaluate the convolution of the harmonic Green's 
function with nonsmooth distributions
(i.e., layer and volume potentials in complicated geometry).
For the sake of simplicity, we restrict ourselves to quasi-uniform 
discretizations and a two-level implementation, accelerated by the NUFFT
instead of the full DMK machinery. We will return to this topic in
\cref{s:discussion}.
\end{remark}

It should be noted that
the decomposition of a volume heat potential in two pieces,
namely $\mathcal{V}[f](\bm{x}) = \mathcal{V}_L[f](\bm{x}) 
+ \mathcal{V}_H[f](\bm{x})$ is essentially the basis for Ewald summation
\cite{ewald},
although originally developed for discrete sources in three dimensions
with periodic boundary conditions, rather than continuous distributions
in free space.  In Ewald's treatment, modified
for the two dimensional setting, 
the basic idea is that of  {\em kernel splitting}:
expressing the Green's function as
\begin{equation}
\label{eq:splitting}
    G(\bm{x}) = -\frac{1}{2\pi}\log\|\bm{x}\| = G_H[\delta](\bm{x}) 
+ G_L[\delta](\bm{x}) \, , 
\end{equation}
with 
\begin{equation}
\label{eq:splitting2}
G_H[\delta](\bm{x}) = \left[ -\frac{1}{2\pi}\log\|\bm{x}\| 
    -\frac{1}{4\pi}E_1\! \left( \frac{\|\bm{x}\|^2}{4\delta} \right) 
\right] \quad , \quad 
G_L[\delta](\bm{x}) =  \frac{1}{4\pi}
E_1\!\left( \frac{\|\bm{x}\|^2}{4\delta} \right) \, ,
\end{equation}
where 
\[ E_1(x) = \int_1^{\infty} \frac{e^{-tx}}{t} \, \mathrm{d}t \]
is the exponential integral function \cite{dlmf}.
The connection between the decomposition in terms of heat potentials
and kernel splitting is established by
the identity 
\begin{equation}
 \int_0^{\delta} \!
\frac{e^{-\|\bm{x}\|^2/4t}}{4 \pi t} \, \mathrm{d}t =
\frac{1}{4\pi}E_1 \! \left( \frac{\|\bm{x}\|^2}{4\delta} \right)
\label{eiformula}
\end{equation}
for $\bm{x} \neq 0$.
We refer the reader to 
\cite{klinteberg2017,gs:fhp,dmk,li2009sisc,palssontornberg,shamshirgar2021jcp,strainfpt} 
for further discussion.

It remains only to understand how each of the terms in 
\eqref{heatdecomp} is to be computed. Simply stated,
$\mathcal{V}_L[f](\bm{x})$ is local in both space and time
and will be treated using asymptotics, while 
$\mathcal{V}_{NH}[f](\bm{x})$ and $\mathcal{V}_{FH}[f](\bm{x})$
will be computed using Fourier methods. 

\begin{definition}
For a function $f(\bm{x})$ in two dimensions, we define its Fourier transform
by 
\[ \hat{f}(\bm{k}) = \int_{\bm{x} \in \mathbb{R}^2} e^{-i\bm{k} \cdot \bm{x}} f(\bm{x}) \, \mathrm{d}\bm{x},
\]
where $\bm{k} \in \mathbb{R}^2$. The function $f(\bm{x})$ is recovered from 
$\hat{f}(\bm{k})$ by the inverse transform
\[ f(\bm{x}) = \frac{1}{ (2 \pi)^2} 
\int_{\bm{k} \in \mathbb{R}^2} e^{i\bm{k} \cdot \bm{x}} \hat{f}(\bm{k}) \, \mathrm{d}\bm{k}.
\]
\end{definition}

\begin{lemma} \label{nh_lemma}
For a function $f(\bm{x})$ compactly supported in $\Omega$,
the near history component has the Fourier representation
\begin{equation}
\mathcal{V}_{NH}[f](\bm{x}) =  
\int_{\bm{k} \in \mathbb{R}^2}
\frac{ e^{- \|\bm{k}\|^2 \delta_1}- e^{- \|\bm{k}\|^2 \delta_2}}{\|\bm{k}\|^2}
e^{i \bm{k \cdot x}} \hat{f}(\bm{k}) \, \mathrm{d}\bm{k}.
\label{vnearkspace}
\end{equation}
\end{lemma}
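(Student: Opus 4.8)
The plan is to recognize the inner spatial integral in the definition of $\mathcal{V}_{NH}[f]$ as a convolution with the Gaussian heat kernel, pass to Fourier space where that kernel becomes a pure Gaussian multiplier, carry out the resulting elementary integral in $t$, and invert. First I would extend $f$ by zero to all of $\mathbb{R}^2$; since $f$ is smooth and compactly supported in $\Omega$, this extension is still Schwartz, does not change $\hat{f}$, and lets us write the inner integral as the convolution $(K(\cdot,t) * f)(\bm{x})$ with $K(\bm{x},t) = e^{-\|\bm{x}\|^2/4t}/(4\pi t)$. The spatial Fourier transform of the normalized Gaussian is $\widehat{K(\cdot,t)}(\bm{k}) = e^{-\|\bm{k}\|^2 t}$, so by the convolution theorem the Fourier transform of $K(\cdot,t)*f$ in $\bm{x}$ is $e^{-\|\bm{k}\|^2 t}\,\hat{f}(\bm{k})$.

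Next I would write $\mathcal{V}_{NH}[f](\bm{x}) = \int_{\delta_1}^{\delta_2}\!\big(K(\cdot,t)*f\big)(\bm{x})\,\mathrm{d}t$, substitute the inverse Fourier representation of $K(\cdot,t)*f$, and interchange the $t$- and $\bm{k}$-integrations. Performing the inner integral gives
\[
\int_{\delta_1}^{\delta_2} e^{-\|\bm{k}\|^2 t}\,\mathrm{d}t
= \frac{e^{-\|\bm{k}\|^2 \delta_1} - e^{-\|\bm{k}\|^2 \delta_2}}{\|\bm{k}\|^2},
\]
which is precisely the multiplier appearing in \eqref{vnearkspace}; at $\bm{k}=\bm{0}$ the right-hand side is read as its limit $\delta_2-\delta_1$, matching the left-hand side. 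Reassembling the inverse transform then yields the claimed identity.

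The only step genuinely requiring care is the Fubini interchange of $\int_{\delta_1}^{\delta_2}\mathrm{d}t$ and $\int_{\mathbb{R}^2}\mathrm{d}\bm{k}$. This is where I would be explicit: since $f\in L^1(\mathbb{R}^2)$ we have the uniform bound $|\hat{f}(\bm{k})|\le\|f\|_{L^1}$, the interval $[\delta_1,\delta_2]$ has finite length, and $e^{-\|\bm{k}\|^2 t}\le e^{-\|\bm{k}\|^2 \delta_1}$ is integrable over $\mathbb{R}^2$ because $\delta_1>0$; hence the double integral converges absolutely and the interchange is valid. The same two facts ($\delta_1>0$ and boundedness of $\hat{f}$) also show that the apparent singularity of the final multiplier at the origin is removable, so no principal-value interpretation is needed. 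Everything else reduces to the convolution theorem and a one-line integral, so I do not expect any further obstacles.
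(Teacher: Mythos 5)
Your argument is precisely the one the paper gives (convolution theorem, Gaussian Fourier transform $\widehat{K(\cdot,t)}(\bm{k})=e^{-\|\bm{k}\|^2 t}$, then integration over $t\in[\delta_1,\delta_2]$), with the Fubini interchange and the removable singularity at $\bm{k}=\bm{0}$ spelled out rather than left implicit. It matches the paper's proof and is correct.
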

\begin{proof}
This result follows from the fact that the Fourier transform of
${e^{-\|\bm{x}\|^2/4t}}/(4 \pi t)$ is $e^{- \|\bm{k}\|^2 t}$,
the convolution theorem, and
integration in time.
\end{proof}

\vspace{.2in}

Note that the integrand in \eqref{vnearkspace} is rapidly decaying in
Fourier space and that it is infinitely differentiable, since 
the expression
\[ 
\frac{ e^{- \|\bm{k}\|^2 \delta_1}- e^{- \|\bm{k}\|^2 \delta_2}}{\|\bm{k}\|^2}
\]
has a convergent power series and $f(\bm{x})$ is assumed to have compact support.

The far history is a little more delicate, as 
its Fourier transform 
has a singularity of the order $1/\|\bm{k}\|^2$ at the origin
and its rigorous analysis would involve consideration of tempered distributions. 
Using the method of 
\cite{vico2016jcp}, however, we have the following lemma.

\begin{lemma}
Suppose that $f(\bm{x})$ is compactly supported in $\Omega \subset [-1,1]^2$.
Then, for $\bm{x}$ with $\| \bm{x}\| \leq 1 $,
$\mathcal{V}_{FH}[f](\bm{x})$ has the Fourier
representation
\begin{equation}
\mathcal{V}_{FH}[f](\bm{x}) \approx  
\int_{\bm k \in \mathbb{R}^2}
e^{- \|\bm{k}\|^2 \delta_2}
W(\bm{k})
e^{i \bm{k \cdot x}} \hat{f}(\bm{k}) \, \mathrm{d}\bm{k},
\label{vfarkspace}
\end{equation}
where
\begin{equation}
\label{Wdef}
W(\bm{k}) = \left[ \frac{1 - J_0(2 \sqrt{2}\, \|\bm{k}\|)}{\|\bm{k}\|^2} - 
\frac{2 \sqrt{2} \log(2 \sqrt{2}) \, J_1( 2\sqrt{2} \, \|\bm{k}\|)}{ \| \bm{k}\|} 
\right],
\end{equation}
with $J_0$ and $J_1$ denoting Bessel functions of the first kind. The error 
in \eqref{vfarkspace} is of the 
order $O(e^{-d^2/\delta_2})$, where $d$ is the distance of the support of $f(\bm{x})$ from the enclosing
box $[-1,1]^2$. 
\end{lemma}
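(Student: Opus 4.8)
\emph{Proof proposal.}
The plan is to read this statement as the truncated-kernel construction of \cite{vico2016jcp}, applied not to the logarithmic kernel directly but to the heat-smoothed volume potential, together with an explicit Hankel-transform evaluation of the Fourier symbol of the truncated logarithm. Write $K_t(\bm{x}) = e^{-\|\bm{x}\|^2/4t}/(4\pi t)$ for the two-dimensional heat kernel. I would first fix the meaning of $\mathcal{V}_{FH}[f]$: by \eqref{eiformula} the inner integrals defining $\mathcal{V}_L$ and $\mathcal{V}_{NH}$ converge and equal $G_L[\delta_1]\ast f$ and $(G_L[\delta_2]-G_L[\delta_1])\ast f$, so \eqref{heatdecomp} forces
\begin{equation}
\mathcal{V}_{FH}[f] \;=\; \mathcal{V}[f] - \mathcal{V}_L[f] - \mathcal{V}_{NH}[f] \;=\; G_H[\delta_2]\ast f ,
\end{equation}
with $G_H[\delta_2]$ as in \eqref{eq:splitting2} --- a genuine finite, continuous function, whereas the literal time integral $\int_{\delta_2}^{\infty}\!\int_{\Omega}K_t f$ diverges only by the $\bm{x}$-independent additive constant of the two-dimensional Green's function (this is the content of the phrase ``consideration of tempered distributions'' above). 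Since (by differentiating in $\delta_2$ and using $-\Delta G=\delta_0$, or directly in Fourier space) $G_H[\delta_2] = K_{\delta_2}\ast G$, we have $\mathcal{V}_{FH}[f] = K_{\delta_2}\ast\mathcal{V}[f]$, whose Fourier symbol $e^{-\|\bm{k}\|^2\delta_2}/\|\bm{k}\|^2$ nonetheless has a non-integrable singularity at $\bm{k}=\bm{0}$ whenever $\hat f(\bm{0})\neq0$; removing it is exactly what the truncation below does.

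Next I would truncate. Set $R = 2\sqrt2 = \diam([-1,1]^2)$ and $G^{[R]}(\bm{x}) = G(\bm{x})\,\mathbf{1}_{\|\bm{x}\|\le R}$. If $\operatorname{supp}f$ lies at distance $d$ from $\partial([-1,1]^2)$, i.e.\ $\operatorname{supp}f\subseteq[-1+d,1-d]^2$, then for every $\bm{x}\in[-1-d,1+d]^2$ and $\bm{x'}\in\operatorname{supp}f$ one has $\|\bm{x}-\bm{x'}\|_\infty\le2$, hence $\|\bm{x}-\bm{x'}\|\le R$, so $(G^{[R]}\ast f)(\bm{x}) = (G\ast f)(\bm{x}) = \mathcal{V}[f](\bm{x})$ throughout $[-1-d,1+d]^2$. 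By the convolution theorem the $\bm{k}$-space integral in the statement equals $(K_{\delta_2}\ast G^{[R]}\ast f)(\bm{x})$ provided $\widehat{G^{[R]}}=W$ (verified below), so the difference to be bounded is $\mathcal{V}_{FH}[f] - (K_{\delta_2}\ast G^{[R]}\ast f) = K_{\delta_2}\ast\bigl((G-G^{[R]})\ast f\bigr)$. Since $(G-G^{[R]})\ast f$ vanishes on $[-1-d,1+d]^2$ and satisfies the crude bound $\lvert(G-G^{[R]})\ast f(\bm{y})\rvert\lesssim\|f\|_{L^1}\bigl(1+\log(1+\|\bm{y}\|)\bigr)$, and since $\|\bm{x}-\bm{y}\|\ge d$ wherever that product is nonzero (for $\|\bm{x}\|\le1$), we obtain
\begin{equation}
\Bigl\lvert\,\mathcal{V}_{FH}[f](\bm{x}) - \int_{\mathbb{R}^2}\! e^{-\|\bm{k}\|^2\delta_2}\,W(\bm{k})\,e^{i\bm{k}\cdot\bm{x}}\,\hat f(\bm{k})\,\mathrm{d}\bm{k}\,\Bigr\rvert \;\lesssim\; \frac{\|f\|_{L^1}}{\delta_2}\int_{\|\bm{z}\|>d}\! e^{-\|\bm{z}\|^2/4\delta_2}\bigl(1+\log(2+\|\bm{z}\|)\bigr)\,\mathrm{d}\bm{z} \;=\; O\!\bigl(e^{-d^2/\delta_2}\bigr),
\end{equation}
using the Gaussian tail estimate $\int_d^\infty\rho\,e^{-\rho^2/4\delta_2}\,\mathrm{d}\rho = 2\delta_2 e^{-d^2/4\delta_2}$ (the logarithm changes only the constant in the exponent, which the statement absorbs).

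It remains to verify $W(\bm{k}) = \widehat{G^{[R]}}(\bm{k})$. By radial symmetry, with $\kappa = \|\bm{k}\|$ and $\int_0^{2\pi}e^{-i\kappa\rho\cos\theta}\,\mathrm{d}\theta = 2\pi J_0(\kappa\rho)$,
\begin{equation}
\widehat{G^{[R]}}(\bm{k}) \;=\; -\frac{1}{2\pi}\int_{\|\bm{x}\|\le R}\log\|\bm{x}\|\;e^{-i\bm{k}\cdot\bm{x}}\,\mathrm{d}\bm{x} \;=\; -\int_0^{R}\rho\log\rho\;J_0(\kappa\rho)\,\mathrm{d}\rho ,
\end{equation}
and one integration by parts --- using $\tfrac{\mathrm{d}}{\mathrm{d}\rho}\bigl[\tfrac{\rho}{\kappa}J_1(\kappa\rho)\bigr] = \rho J_0(\kappa\rho)$, the vanishing of the boundary term at $\rho=0$ (where $\rho^2\log\rho\to0$), and $\int_0^R J_1(\kappa\rho)\,\mathrm{d}\rho = \kappa^{-1}\bigl(1-J_0(\kappa R)\bigr)$ --- gives
\begin{equation}
\widehat{G^{[R]}}(\bm{k}) \;=\; \frac{1-J_0(R\kappa)}{\kappa^2} - \frac{R\log R\;J_1(R\kappa)}{\kappa} ,
\end{equation}
which is \eqref{Wdef} with $R = 2\sqrt2$. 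Fourier inversion of $\widehat{K_{\delta_2}\ast G^{[R]}\ast f} = e^{-\|\bm{k}\|^2\delta_2}W(\bm{k})\hat f(\bm{k})$ then completes the argument. I would close by noting that $W$ is a smooth (indeed entire) function of $\bm{k}$ --- the apparent singularities at $\bm{k}=\bm{0}$ are removable, since $1-J_0(z)=\tfrac{z^2}{4}-\cdots$ and $J_1(z)=\tfrac z2-\cdots$ --- so the integrand in \eqref{vfarkspace} is smooth and rapidly decaying, which is the whole point: it is what makes the integral amenable to the NUFFT.

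I expect the main obstacle to be the error estimate: pinning down exactly on which enlargement of the target region the truncated and exact potentials coincide as a function of $d$, and then dominating the Gaussian ``leak'' of $K_{\delta_2}$ against the logarithmic growth of $(G-G^{[R]})\ast f$ at infinity. By comparison the kernel identity $G_H[\delta_2] = K_{\delta_2}\ast G$ and the Bessel-function integral are routine --- modulo the standard, but worth stating, care needed to interpret the divergent long-time tail of the heat representation.
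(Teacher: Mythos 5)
Your proposal is correct and follows the same approach the paper attributes to \cite{vico2016jcp}: truncate the Green's function at a radius $R=2\sqrt2$ large enough that the truncated kernel agrees with $G$ for all source/target pairs of interest, evaluate $\widehat{G^{[R]}}$ via the Hankel-transform computation and a single integration by parts (which is exactly where the $J_0,J_1$ terms and the $R\log R$ coefficient come from), and bound the discrepancy by Gaussian-tail ``leakage.'' The paper gives only a one-sentence sketch and refers the reader elsewhere; you have filled in the details, in particular the identity $G_H[\delta_2]=K_{\delta_2}\ast G$ (most cleanly seen by noting both sides have negative Laplacian $K_{\delta_2}$, are radial, and match $-\tfrac{1}{2\pi}\log\|\bm{x}\|$ at infinity) and the clean bookkeeping of which set $G^{[R]}\ast f$ and $G\ast f$ agree on. One small but worthwhile clarification relative to the paper's wording: the sketch says to ``replace the heat kernel with $K_t\,\mathrm{rect}(\|\bm{x}\|/4)$,'' i.e.\ truncate $K_t$ at each fixed $t$, but a time-by-time truncation does not have a closed-form Fourier transform and would not produce $W(\bm{k})$ (and $\mathrm{rect}(\|\bm{x}\|/4)$ truncates at radius $2$, not $2\sqrt2$); what actually yields \eqref{Wdef} is precisely your reading --- truncate the time-integrated kernel $G$ and then smooth with $K_{\delta_2}$. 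Your Gaussian-tail estimate gives $O(e^{-d^2/4\delta_2})$ up to logarithmic factors rather than the literal $O(e^{-d^2/\delta_2})$ in the statement, but as you note the constant in the exponent is not meant to be sharp.
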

\begin{proof}
The result follows from the convolution theorem, as in \cref{nh_lemma},
and integration in time, but replacing the heat kernel with
\[  
\frac{e^{-\|\bm{x}\|^2/4t}}{4 \pi t} \, {\rm rect}(\|\bm{x}\|/4),
\]
where ${\rm rect}(x)$ is the characteristic function for the unit interval,
\[
{\rm rect}(x) =  \left\{
\begin{array}{cc}
 1 & {\rm for}\ |x|<1/2, \\
 0 & {\rm for}\ |x|>1/2.
\end{array}
\right.
\]
This modification gives rise to the term $W(\bm{k})$ in \eqref{Wdef}, 
in place of $1/\|\bm{k}\|^2$, in the integrand.
We omit the detailed calculation and refer the interested reader to 
\cite{dmk,vico2016jcp}. 
The error is due to the fact that the mollification of $f(\bm{x})$ involves exponentially decaying
leakage outside the unit box. This can be made arbitrarily small by slightly increasing the 
box size and rescaling.
\end{proof}

\begin{lemma}
    The function $W(\bm{k})$ in \cref{Wdef} is smooth with limiting value
    \begin{equation}
        \lim\limits_{\|\bm{k}\| \rightarrow 0} W(\bm{k}) = \frac{(1-2\log(2\sqrt{2}))(2\sqrt{2})^{2}}{4}.
    \end{equation}
\end{lemma}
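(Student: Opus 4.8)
The plan is to reduce the claim to the standard power-series expansions of the Bessel functions about the origin \cite{dlmf}. Write $a = 2\sqrt{2}$ and $r = \|\bm{k}\|$, and recall that
\[
J_0(z) = \sum_{m \geq 0} \frac{(-1)^m}{(m!)^2} \left( \frac{z}{2}\right)^{2m}, \qquad
J_1(z) = \sum_{m \geq 0} \frac{(-1)^m}{m!\,(m+1)!} \left( \frac{z}{2}\right)^{2m+1},
\]
both entire functions. First I would substitute $z = ar$ into these series. The key observation is that $1 - J_0(ar)$ has vanishing constant term and contains only even powers of $r$, with leading term $a^2 r^2 / 4$; dividing by $r^2$ therefore yields an everywhere-convergent power series in $r^2$ whose value at $r = 0$ is $a^2/4$. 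Similarly, $J_1(ar)$ contains only odd powers of $r$ with leading term $ar/2$, so $J_1(ar)/r$ is an everywhere-convergent power series in $r^2$ with value $a/2$ at $r = 0$. Thus each of the two terms in \cref{Wdef}, despite its apparent singularity at $\bm{k} = 0$, is in fact (the restriction to $r > 0$ of) an entire function of $r^2$; the two removable singularities cancel off individually and do not even interact.

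Second, I would deduce smoothness by composition: since $r^2 = \|\bm{k}\|^2 = k_1^2 + k_2^2$ is a polynomial in the components of $\bm{k}$, and $W$ is an entire function of $r^2$, the continuously extended function $W(\bm{k})$ is real-analytic, hence $C^\infty$, on all of $\mathbb{R}^2$.

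Finally, letting $r \to 0$ term by term gives
\[
\lim_{\|\bm{k}\| \to 0} W(\bm{k}) = \frac{a^2}{4} - a \log a \cdot \frac{a}{2} = \frac{a^2 \bigl( 1 - 2 \log a \bigr)}{4},
\]
which is precisely the asserted value upon setting $a = 2\sqrt{2}$. There is no genuine obstacle in this argument; the only point requiring (routine) care is the justification of the term-by-term manipulations, which is immediate from the uniform convergence of the Bessel series on compact subsets of $\mathbb{R}$.
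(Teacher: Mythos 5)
Your proof is correct and follows the same route the paper alludes to immediately after the lemma, namely Taylor (power-series) expansion of $J_0$ and $J_1$ at the origin; the paper itself does not spell out the computation, so your write-up simply makes that remark precise. The identification of each term of $W$ as an even power series in $r=\|\bm{k}\|$ (hence a convergent series in $\|\bm{k}\|^2$, giving real-analyticity in $\bm{k}$) and the evaluation of the constant terms are both accurate.
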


As for \eqref{vnearkspace},
the integrand in \eqref{vfarkspace} is clearly exponentially decaying.
Taylor expansion of the Bessel functions at the origin shows that
the integrand is also infinitely differentiable. 
As a result, both $\mathcal{V}_{NH}[f](\bm{x})$ and
$\mathcal{V}_{FH}[f](\bm{x})$ can be computed using the 
trapezoidal rule for quadrature with 
spectral accuracy \cite{trefethentrap}.

There are several novel features of the method described in this paper.
First, we have derived new asymptotic expansions for 
$\calV_L[f]$ when $f$ is smooth in $\Omega$ but discontinuous
as a function in $\mathbb{R}^2$ as well as high order expansions for
$\calS_L[\sigma]$ and $\calD_L[\sigma]$ for points off surface.
Second, we have developed a new telescoping series
for the local part of layer (and volume) potentials that can be combined 
with our asymptotic techniques to yield arbitrary order accuracy.
Third, our division of the history part into the near and far components
reduces the cost of the Fourier transforms 
by a significant (albeit constant) factor
(see \cref{r:twolevel} below).
\subsection{Layer potentials}
The equivalence with steady state limits of the heat equation
extends to single and double layer potentials as well.
That is, we may write

\begin{equation}
\mathcal{S}[\sigma](\bm{x}) \ = \ 
\int_0^{\infty} \!\int_{\partial \Omega} \!
\frac{e^{-\|\bm{x-x'}\|^2/4t}}{4 \pi t} \, 
\sigma(\bm{x'}) \,\mathrm{ds}_{\bm{x'}} \, \mathrm{d}t \ \ = \ \ 
 \mathcal{S}_L[\sigma](\bm{x})  + 
\mathcal{S}_{NH}[\sigma](\bm{x})  + \mathcal{S}_{FH}[\sigma](\bm{x}), 
\label{sheatdecomp}
\end{equation}
and
\begin{equation}
\mathcal{D}[\mu](\bm{x}) =
\int_0^{\infty} \!\int_{\partial \Omega}
\frac{\partial}{\partial {\nu}_{\bm{x'}}}
\frac{e^{-\|\bm{x-x'}\|^2/4t}}{4 \pi t} \, \mu(\bm{x}') \,\mathrm{ds}_{\bm{x}'} \, \mathrm{d}t 
\ = \ 
  \mathcal{D}_L[\mu](\bm{x})  + 
\mathcal{D}_{NH}[\mu](\bm{x})  + \mathcal{D}_{FH}[\mu](\bm{x}) \, .
\label{dheatdecomp}
\end{equation}
Here,
\begin{align}
\mathcal{S}_L[\sigma](\bm{x}) &=  
\int_0^{\delta_1}\! \int_{\partial \Omega}\!
\frac{e^{-\|\bm{x-x'}\|^2/4t}}{4 \pi t} \, \sigma(\bm{x'}) \,\mathrm{ds}_{\bm{x'}} \, \mathrm{d}t, 
\label{sheatloc} \\
\mathcal{D}_L[\mu](\bm{x}) &=  
\int_0^{\delta_1} \! \int_{\partial \Omega}
\frac{\partial}{\partial {\nu}_{\bm{x'}}}
\frac{e^{-\|\bm{x-x'}\|^2/4t}}{4 \pi t} \, \mu(\bm{x}') \,\mathrm{ds}_{\bm{x}'} \, \mathrm{d}t, 
\label{dheatloc}
\end{align}
and so on.  From the preceding discussion, the proof of the following two
lemmas is straightforward.
\begin{lemma}
The near history of the single layer potential has the Fourier representation
\begin{equation}
\mathcal{S}_{NH}[\sigma](\bm{x}) =  
\int_{\bm{k} \in \mathbb{R}^2}\!
\frac{ e^{- \|\bm{k}\|^2 \delta_1}- e^{- \|\bm{k}\|^2 \delta_2}}{\|\bm{k}\|^2}
e^{i \bm{k \cdot x}} \hat{\sigma}(\bm{k}) \, \mathrm{d}\bm{k},
\label{snearkspace}
\end{equation}
where 
\[ \hat{\sigma}(\bm{k}) = \int_{\partial\Omega} 
\sigma(\bm{x}) e^{-i \bm{k \cdot x}} \, \mathrm{ds}_{\bm{x}}.
\]
The near history of the double layer potential has the Fourier representation
\begin{equation}
\mathcal{D}_{NH}[\mu](\bm{x}) =  
\int_{\bm{k} \in \mathbb{R}^2}\!
\frac{ e^{- \|\bm{k}\|^2 \delta_1}- e^{- \|\bm{k}\|^2 \delta_2}}{\|\bm{k}\|^2}
e^{i \bm{k \cdot x}} \,
[ ik_1 \widehat{\mu \, \nu_1}(\bm{k}) +
 ik_2 \widehat{\mu \, \nu_2}(\bm{k}) ]
 \, \mathrm{d}\bm{k},
\label{dnearkspace}
\end{equation}
where 
\[ \widehat{\mu \, \nu_j}(\bm{k}) = \int_{\partial \Omega} 
\mu(\bm{x}) \nu_j(\bm{x}) e^{-i \bm{k \cdot x}} \, \mathrm{ds}_{\bm{x}},
\]
$\bm{k} = (k_1,k_2)$,
and $(\nu_1(\bm{x}),\nu_2(\bm{x}))$ are the components of the normal at 
$\bm{x} \in \partial\Omega$.
\end{lemma}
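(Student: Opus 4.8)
The plan is to follow the proof of \cref{nh_lemma} essentially verbatim for the single layer potential, and to insert one chain-rule (integration-by-parts) step for the double layer. In both cases the layer density is regarded as a compactly supported finite measure on $\partial\Omega$ --- for instance $\sigma(\bm{x'})\,\mathrm{ds}_{\bm{x'}}$ for the single layer --- whose Fourier transform is exactly the $\hat\sigma(\bm{k})$ of the statement. Because $\delta_1 > 0$, every inner convolution in time carries a genuine Gaussian, so none of the tempered-distribution care needed for $\calV_{FH}$ in the previous lemma arises here; the argument is elementary.

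For the single layer, write $K(\bm{x},t) = e^{-\|\bm{x}\|^2/4t}/(4\pi t)$, so that
\begin{equation}
  \calS_{NH}[\sigma](\bm{x}) = \int_{\delta_1}^{\delta_2}\!\big( K(\cdot,t) * (\sigma\,\mathrm{ds})\big)(\bm{x})\,\mathrm{d}t .
\end{equation}
By the convolution theorem and the identity $\widehat{K(\cdot,t)}(\bm{k}) = e^{-\|\bm{k}\|^2 t}$ already used in \cref{nh_lemma}, the Fourier transform in $\bm{x}$ of the integrand is $e^{-\|\bm{k}\|^2 t}\,\hat\sigma(\bm{k})$. Inverting, interchanging the $t$- and $\bm{k}$-integrals (Fubini is legitimate since for $t \ge \delta_1 > 0$ the Gaussian dominates and $\hat\sigma$ is bounded, $\partial\Omega$ being compact and $\sigma$ integrable), and using $\int_{\delta_1}^{\delta_2} e^{-\|\bm{k}\|^2 t}\,\mathrm{d}t = (e^{-\|\bm{k}\|^2\delta_1} - e^{-\|\bm{k}\|^2\delta_2})/\|\bm{k}\|^2$ gives \eqref{snearkspace}. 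The multiplier is smooth across $\bm{k} = \bm{0}$ (Taylor-expanding the numerator gives the limiting value $\delta_2 - \delta_1$) and decays like $e^{-\|\bm{k}\|^2\delta_1}$, so the representing integral converges absolutely.

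For the double layer, the only new step is to move the boundary normal derivative off the heat kernel. The chain rule gives $\partial_{\nu_{\bm{x'}}} K(\bm{x} - \bm{x'}, t) = \sum_{j=1}^{2} \nu_j(\bm{x'})\,\partial_{x_j} K(\bm{x} - \bm{x'}, t)$ (signs as dictated by the convention for $\partial G/\partial\nu_{\bm{x'}}$ in \eqref{eq:dlpot}); since $\bm{\nu}(\bm{x'})$ is independent of $\bm{x}$, each $\partial_{x_j}$ commutes with the boundary integral, so
\begin{equation}
  \calD_{NH}[\mu](\bm{x}) = \sum_{j=1}^{2} \frac{\partial}{\partial x_j} \int_{\delta_1}^{\delta_2}\!\big( K(\cdot,t) * (\mu\,\nu_j\,\mathrm{ds})\big)(\bm{x})\,\mathrm{d}t .
\end{equation}
Applying the single-layer computation to each measure $\mu\,\nu_j\,\mathrm{ds}$ (whose Fourier transform is $\widehat{\mu\,\nu_j}(\bm{k})$) and using that $\partial_{x_j}$ is multiplication by $ik_j$ in Fourier space produces the factor $ik_1\widehat{\mu\nu_1}(\bm{k}) + ik_2\widehat{\mu\nu_2}(\bm{k})$, i.e.\ \eqref{dnearkspace}; the extra power of $k_j$ only improves convergence at large $\|\bm{k}\|$ and does not affect smoothness at the origin.

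The only point requiring any care --- and it is slight --- is justifying the distributional manipulations with the singular sources: one checks that $\sigma\,\mathrm{ds}$ (and $\mu\,\nu_j\,\mathrm{ds}$) is a finite compactly supported measure, that its Fourier transform is a bounded continuous function, and that the double integral over $(t,\bm{k}) \in [\delta_1,\delta_2]\times\mathbb{R}^2$ is absolutely convergent so that Fubini applies. I do not expect a genuine obstacle; with $\delta_1 > 0$ fixed this is, as the text remarks, a routine corollary of \cref{nh_lemma}, the heat-kernel Fourier transform, and one integration by parts.
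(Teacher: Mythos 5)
Your proof is correct and takes essentially the same route the paper implies: the paper supplies no written argument (it only remarks that these lemmas follow ``from the preceding discussion''), and that discussion is exactly what you invoke --- the Fourier transform of the heat kernel, the convolution theorem applied to the compactly supported boundary measure, and integration in $t$ over $[\delta_1,\delta_2]$ --- with the one additional routine step of commuting the normal derivative with the boundary integral so that it becomes multiplication by $i\bm{k}\cdot\bm{\nu}$ in Fourier space. Your parenthetical hedge about the sign arising from $\partial_{\nu_{\bm{x'}}}K(\bm{x}-\bm{x'},t)$ versus $\sum_j \nu_j\,\partial_{x_j}K(\bm{x}-\bm{x'},t)$ is the single point that must be pinned down carefully against the paper's convention for $\partial G/\partial\nu_{\bm{x'}}$, but it does not change the structure of the argument.
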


\begin{lemma}
The far history of the single layer potential has the Fourier representation
\begin{equation}
\mathcal{S}_{FH}[\sigma](\bm{x}) =  
\int_{\bm{k} \in \mathbb{R}^2}
e^{- \|\bm{k}\|^2 \delta_2}\,
W(\bm{k})
e^{i \bm{k \cdot x}} \hat{\sigma}(\bm{k}) \, \mathrm{d}\bm{k},
\label{sfarkspace}
\end{equation}
where $W(\bm{k})$ is given by \eqref{Wdef}.
The far history of the double layer potential has the Fourier representation
\begin{equation}
\mathcal{D}_{FH}[\mu](\bm{x}) =  
\int_{\bm{k} \in \mathbb{R}^2}
e^{- \|\bm{k}\|^2 \delta_2}\,
W(\bm{k})
e^{i \bm{k \cdot x}} \,
[ ik_1 \widehat{\mu \, \nu_1}(\bm{k}) +
 ik_2 \widehat{\mu \, \nu_2}(\bm{k}) ]
 \, \mathrm{d}\bm{k}.
\label{dfarkspace}
\end{equation}
\end{lemma}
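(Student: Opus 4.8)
The plan is to transcribe, almost verbatim, the proof of the far-history lemma for $\mathcal{V}_{FH}$ in \eqref{vfarkspace}, the only genuinely new bookkeeping being the normal derivative that appears in the double-layer case. First I would spell out the far-history pieces implicit in \eqref{sheatdecomp}--\eqref{dheatdecomp}, namely
\begin{equation*}
\mathcal{S}_{FH}[\sigma](\bm{x}) = \int_{\delta_2}^{\infty}\!\!\int_{\partial\Omega}\frac{e^{-\|\bm{x}-\bm{x}'\|^2/4t}}{4\pi t}\,\sigma(\bm{x}')\,\mathrm{d}s_{\bm{x}'}\,\mathrm{d}t,
\end{equation*}
and the analogue for $\mathcal{D}_{FH}$ with $\tfrac{\partial}{\partial\nu_{\bm{x}'}}$ applied to the heat kernel, exactly as in \eqref{dheatloc}. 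Since $t$ is bounded away from $0$, the time-integrated kernel $\int_{\delta_2}^{\infty}e^{-\|\bm{x}\|^2/4t}/(4\pi t)\,\mathrm{d}t$ and its gradient are smooth and bounded on compact sets (the logarithmic singularity being cancelled in $G_H[\delta_2]$, cf.\ \eqref{eiformula} and \eqref{eq:splitting2}), so Tonelli's theorem lets me interchange the $t$- and arc-length integrations; the inner integral is then a convolution of the heat kernel (resp.\ its gradient) at time $t$ with the surface-supported measure $\sigma\,\mathrm{d}s_{\partial\Omega}$ (resp.\ $\mu\,\nu_j\,\mathrm{d}s_{\partial\Omega}$), whose Fourier transforms are the smooth functions $\hat{\sigma}(\bm{k})$ and $\widehat{\mu\,\nu_j}(\bm{k})$ already introduced in \eqref{snearkspace}--\eqref{dnearkspace}.

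Next I would apply the convolution theorem together with the fact, used in \cref{nh_lemma}, that the Fourier transform of $e^{-\|\bm{x}\|^2/4t}/(4\pi t)$ is $e^{-\|\bm{k}\|^2 t}$, and integrate in $t$. For the single layer this produces the multiplier $\int_{\delta_2}^{\infty}e^{-\|\bm{k}\|^2 t}\,\mathrm{d}t = e^{-\|\bm{k}\|^2\delta_2}/\|\bm{k}\|^2$; for the double layer, differentiating the heat kernel with respect to $\bm{x}'$ introduces, via $\widehat{\partial_j g}(\bm{k}) = ik_j\hat{g}(\bm{k})$, an extra factor $ik_j$, and summing over the two components of the normal gives the bracket $ik_1\widehat{\mu\,\nu_1}(\bm{k}) + ik_2\widehat{\mu\,\nu_2}(\bm{k})$, with the same sign as in \eqref{dnearkspace}. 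The only subtlety — and it is exactly the one already dealt with for $\mathcal{V}_{FH}$ — is that $1/\|\bm{k}\|^2$ is not locally integrable in two dimensions, so the bare multiplier is meaningful only as a tempered distribution. As in \eqref{vfarkspace}, I would therefore replace the heat kernel by its spatially truncated version, the truncation radius being the diameter $2\sqrt{2}$ of the enclosing box $[-1,1]^2$; this changes nothing on the set of separations $\bm{x}-\bm{x}'$ that actually occur once $\partial\Omega$ and the targets lie inside the box, and after the time integration it replaces $1/\|\bm{k}\|^2$ by the smooth function $W(\bm{k})$ of \eqref{Wdef} — the computation being verbatim that of \cite{dmk,vico2016jcp}. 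The incurred error is the same exponentially small leakage $O(e^{-d^2/\delta_2})$, where $d$ is the distance from the support of $\sigma$ (resp.\ $\mu$) to $\partial([-1,1]^2)$, which is positive once $\partial\Omega$ is placed strictly inside the box (and can be forced by enlarging and rescaling otherwise). Inverting the Fourier transform then yields \eqref{sfarkspace} and \eqref{dfarkspace}.

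I do not expect a real obstacle: the argument is a direct copy of the volume case, and reducing a curve integral against a smooth density to a convolution with a surface measure is routine, since the Fourier transform of that measure is an honest smooth function. The one point deserving a word of justification is the double layer, where one must check that differentiating the heat kernel under both the $t$- and the arc-length integrals is legitimate; this is immediate for $t\ge\delta_2>0$, where the kernel and all of its $\bm{x}'$-derivatives are smooth, rapidly decaying, and uniformly integrable in $t$, after which the precise form of the $ik_j$ terms is dictated by the Fourier differentiation rule and is forced to coincide with the near-history expressions \eqref{snearkspace}--\eqref{dnearkspace}.
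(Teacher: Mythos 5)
The paper itself offers no written proof for this lemma; it simply remarks that ``from the preceding discussion, the proof of the following two lemmas is straightforward.'' Your proposal is precisely the argument the paper intends: restrict the time integral to $(\delta_2,\infty)$, observe that the line integral against a smooth density defines a surface-supported measure with smooth Fourier transform $\hat{\sigma}(\bm{k})$ (resp.\ $\widehat{\mu\nu_j}(\bm{k})$), apply the convolution theorem together with $\widehat{K(\cdot,t)}(\bm{k})=e^{-\|\bm{k}\|^2 t}$ and the Fourier differentiation rule for the double layer, integrate in $t$, and handle the $1/\|\bm{k}\|^2$ singularity by the Vico--Greengard--Ferrando spatial truncation exactly as in the $\mathcal{V}_{FH}$ lemma, yielding $W(\bm{k})$ in place of $1/\|\bm{k}\|^2$ with the same exponentially small leakage error. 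This is the same route as the paper's (implicit) proof, and your justification of differentiating under the integral for $t\ge\delta_2>0$ is a welcome piece of rigor the paper leaves unstated.
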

\subsection{Discretization} \label{s:discrete}
In order to describe our algorithm in detail, we need to fix some 
conventions about discretization and the user interface.

\begin{figure} 
\centering
\includegraphics[width=5in]{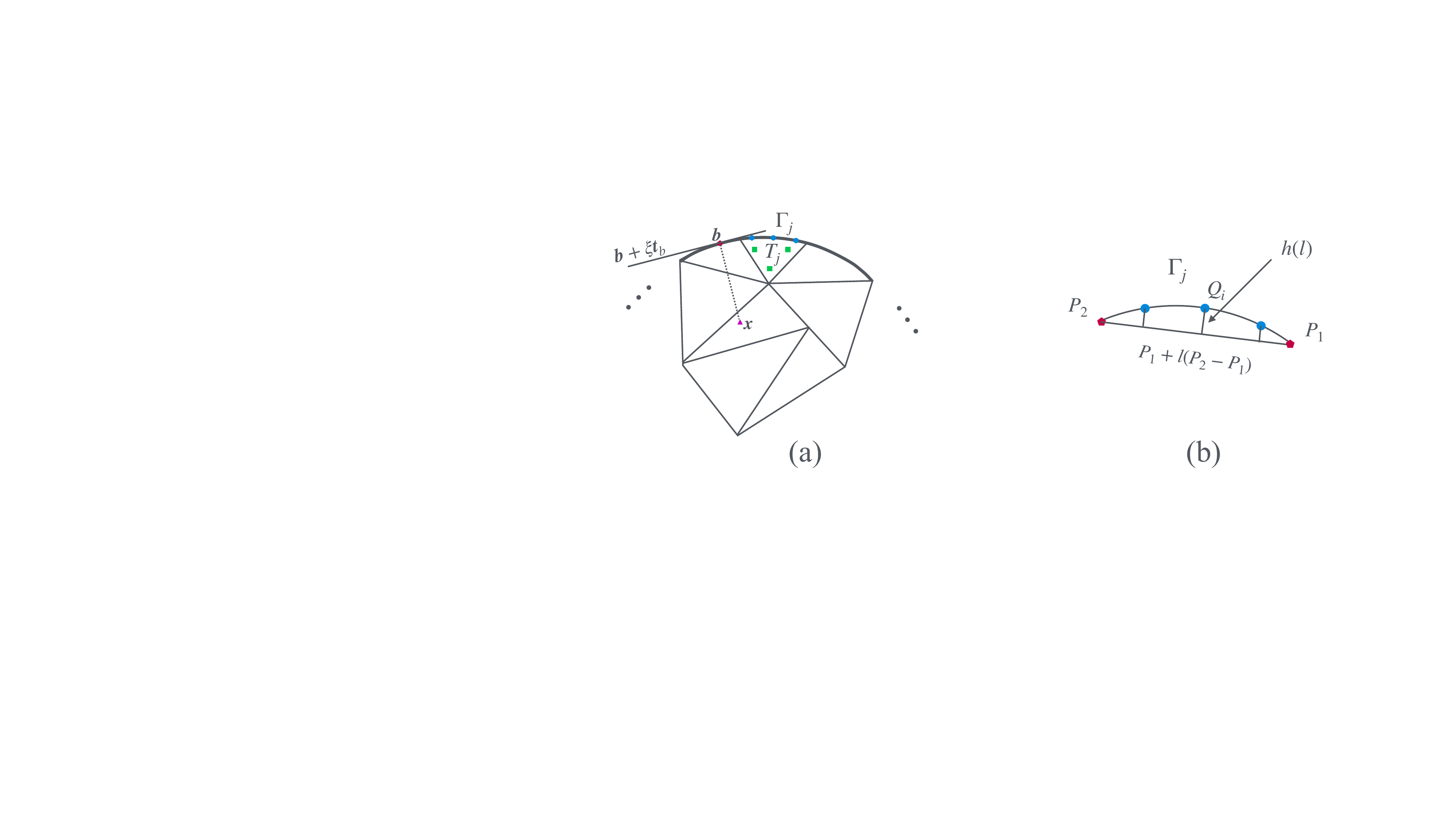}
\caption{(a) A portion of a domain composed of triangular elements.
The triangle  $T_j$ is a {\em boundary triangle} with curved boundary
segment $\Gamma_j$ described by at least three points (indicated by blue
circles). The green squares in $T_j$ 
indicate the locations of quadrature nodes for an integration rule over the 
triangle.  For an interior point $\bm{x}$, we
denote by $\bm{b}$ the closest point on the boundary.
In our asymptotic formulas, we re-express the curve and layer potential 
densities in terms of the parameter $\xi$, the 
excursion in the tangent direction $\bm{t}_b$. 
(b) Given the points defining a curved boundary segment (or {\em chunk}) 
$\Gamma_j$,
we reparametrize $\Gamma_j$ in terms of chordal arclength $l$ 
$-$ that is,
the distance along the chord connecting the first endpoint $P_1$ with the
second endpoint $P_2$. 
$h(l)$ is the excursion of a boundary node $Q_i$ in the direction of the 
outward normal to the chord.
When we wish to refine/oversample the user-provided 
chunk, we do this in the chordal parametrization by interpolating
$h(l)$.
The additional work is negligible and the user interface is unaffected.
}
\label{fig-discrete}
\end{figure}

$\bullet$\ 
We assume that the domain $\Omega$ has been approximated as a union
of triangles: $\Omega^{\ast} = \cup_{j=1}^M T_j$, and that each triangle $T_j$
is equipped with a $p$-point quadrature rule of order $q_v$, so that
\[ \int_{T_j}  g(\bm{x}) \, \mathrm{d}\bm{x} \approx \sum_{l=1}^p g(\bm{x}_l) \, w_l 
\]
for smooth functions $g(\bm{x})$. The total number of interior
degrees of freedom is denoted by $N = Mp$.

\vspace{.1in}

$\bullet$\ 
We will refer to the triangles whose sides form the boundary of $\Omega^{\ast}$ as
{\em boundary triangles}. For each triangle segment lying on the
boundary, we assume that it is described by a polynomial of 
degree $q_B$ with respect to some parametrization (see \cref{fig-discrete}).
For simplicity of presentation, we assume that $q_B \geq 2$, so that
we have at least a first order accurate estimate of curvature.
This requires at least $q_B+1$ points on each segment. 

\vspace{.1in}

$\bullet$\ 
We will refer to the boundary segments as {\em chunks},
and denote by $M_B$ the number of such chunks, so that
$\Gamma^{\ast} =  \partial \Omega^{\ast} = \cup_{j = 1}^{M_B} \Gamma_j$.
The total number of boundary degrees of freedom is
$N_B = M_B(q_B+1)$. Note that for a reasonably uniform discretization,
$N_B \approx \sqrt{N}$.
As we shall see in \cref{s:dyadicref}, we will have the need to oversample
each chunk on the fly. For this, we reparametrize the chunk in terms
of chordal arc length as illustrated in \cref{fig-discrete}. 

\vspace{.1in}

$\bullet$\ 
Let $\delta_1$ denote the local cutoff in defining 
$\calV_L[f]$, $\calS_L[\sigma]$, $\calD_L[\mu]$.
We assume that $\delta_1$ is sufficiently small that, for any 
interior point $\bm{x} \in \Omega$ whose distance from the boundary is
less than $12\sqrt{\delta_1}$, there is a unique point on $\Gamma^{\ast}$
closest to $\bm{x}$. That is, 
$\bm{b} =\argmin_{\bm{y}\in\Gamma^{\ast}}\|\bm{y}-\bm{x}\|_{2}$ is well-defined
and unique (see \cref{fig-discrete}).
(The value $12\sqrt{\delta}$ is chosen because the local part of the heat
kernel has decayed to less than $10^{-14}$ at that distance.)
We assume the availability of a procedure to determine 
$\bm{b}$. In our examples, we use the closest discretization node
to $\bm{x}$ as an initial guess, followed by two steps of Newton's
method to find $\bm{b}$ with high precision. Since this is a standard 
issue in computational geometry, independent of the solver,
we omit further details (except to note that we can make use of the 
chordal parametrization for this).

\vspace{.1in}

$\bullet$\ 
The parameter
$\delta_1$ also controls the rate of decay of the Fourier transform for
$\calV_{NH}[f]$, $\calS_{NH}[\sigma]$, $\calD_{NH}[\mu]$. From the
exponential decay of the integrand, 
once $k^{NH}_{max}$ satisfies $e^{-\left(k^{NH}_{max}\right)^2 \delta_1} < \epsilon$ we have that
\begin{equation}
\mathcal{V}_{NH}[f](\bm{x}) =
\int_{-k^{NH}_{max}}^{k^{NH}_{max}}
\frac{ e^{- \|\bm{k}\|^2 \delta_1}- e^{- \|\bm{k}\|^2 \delta_2}}{\|\bm{k}\|^2}
e^{i \bm{k \cdot x}} \hat{f}(\bm{k}) \, \mathrm{d}\bm{k}  + O(\epsilon) \, ,
\label{fouriernhtrunc}
\end{equation}
and the same holds for $\calS_{NH}[\sigma]$ and $\calD_{NH}[\mu]$.
(For the double layer potential, it is clear from 
\eqref{dnearkspace} that a slightly greater cutoff is needed, namely 
$k^{NH}_{max}$ must satisfy $k^{NH}_{max} e^{-\left(k^{NH}_{max}\right)^2 \delta_1} < \epsilon$.)

\vspace{.1in}

$\bullet$\ 
The parameter $\delta_2$ controls the rate of decay of 
the Fourier transform for
$\calV_{FH}[f]$, $\calS_{FH}[\sigma]$, $\calD_{FH}[\mu]$. 
Once $k^{FH}_{max}$ satisfies $e^{-\left(k^{FH}_{max}\right)^2 \delta_2} < \epsilon$, we have that
\begin{equation}
\mathcal{V}_{FH}[f](\bm{x}) =
\int_{-k^{FH}_{max}}^{k^{FH}_{max}}
e^{- \|\bm{k}\|^2 \delta_2}\,
W(\bm{k})
e^{i \bm{k \cdot x}} \hat{f}(\bm{k}) \, \mathrm{d}\bm{k} +
O(\epsilon) \, ,
\label{vfarkspacetrunc}
\end{equation}
with $W(\bm{k})$ given by \eqref{Wdef}.
The same holds for $\calS_{FH}[\sigma]$ and $\calD_{FH}[\mu]$.
(For the double layer potential, as above, a
slightly greater cutoff is needed, namely 
$k^{FH}_{max}$ must satisfy $k^{FH}_{max} e^{-\left(k^{FH}_{max}\right)^2 \delta_2} < \epsilon$.)

\vspace{.1in}

$\bullet$\ 
It remains to determine the number of discretization points needed for
the integrals in \eqref{fouriernhtrunc} and \eqref{vfarkspacetrunc}.
Let us assume, without loss of generality, that the 
domain of interest lies in $B = [-1,1]^2$.
For interior problems, this requires that $\Omega \subset B$.
For exterior problems, this requires that
$\Omega$, the support of $f(\bm{x})$, and any additional 
target points of interest all lie in $B$.
Assuming our domain is triangulated quasi-uniformly with $N$ interior points,
the average spacing between points, denoted by $\Delta x$, 
in either coordinate direction is of the order $O(1/\sqrt{N})$.
Letting $\delta_1 = O(1/N)$ and
letting  $n^{NH}_f$ denote the number of equispaced quadrature nodes used on
$[-k^{NH}_{max},k^{NH}_{max}]$ in \eqref{fouriernhtrunc}, the error
decays superalgebraically with $n^{NH}_f$, with rapid convergence beginning when 
$n_f = O(\sqrt{N})$. Thus, the total number of discretized Fourier modes, 
$N_F=(n^{NH}_f)^2$ is of the order $O(N)$.
To see this, observe that the maximum excursion between a source
and a target is $2$ in each dimension, so that there are at most 
$2 k^{NH}_{max}/(2\pi)$ oscillations in the integrand and Nyquist sampling
corresponds to $n^{NH}_f = 2k^{NH}_{max}/\pi$.
Since  $\delta_1 = O(1/N) = O(\Delta x^2)$, in order to achieve a precision
$\epsilon$, we must have 
$e^{-\left(k^{NH}_{max}\right)^2 \delta_1} = \epsilon$. Combining these facts,
\begin{equation}
n^{NH}_f \approx  \frac{\sqrt{\log(1/\epsilon)}}{\pi} \cdot O(1/\sqrt{\delta_1}) 
\approx
\frac{\sqrt{\log(1/\epsilon)}}{\pi} \cdot O(\sqrt{N}). 
\label{nfNHest}
\end{equation}
Similar estimates for $n^{NH}_f$ hold for $\calS_{NH}[\sigma]$ or $\calD_{NH}[\mu]$.
For the far history contribution \eqref{vfarkspacetrunc}, 
the kernel $W(\bm{k})$ introduces additional oscillations.
It is straightforward to show that, for Nyquist sampling,
\begin{equation}
n^{FH}_f \approx 
    2\sqrt{2}\, \frac{\sqrt{\log(1/\epsilon)}}{\pi} \cdot O(1/\sqrt{\delta_2}).
\label{nfFHest}
\end{equation}

We omit a detailed proof and refer the interested reader to 
\cite{dmk,vico2016jcp}.
Note that, once $\delta_2> 8\delta_1$, the total number of Fourier modes for the 
far history is smaller than for the near history, despite the fact that
the integrand is more oscillatory. In practice, 
we choose $\delta_2 = 32\delta_1$ for optimal performance.
\section{Rapid evaluation of the near and far history} \label{s:history}
The algorithm below accounts for the near and far history from any combination
of volume, single and double layer potentials using the 
Fourier representations from the preceding section. 
All computations are easily carried out using the NUFFT
\cite{finufft,finufftlib,nufft2,nufft3,nufft4,nufft6,potts2004}.

\vspace{.2in}

\noindent
\begin{center}
{\bf NUFFT computation of history part}
\end{center}
\vspace{.2in}

{{\bf Input}:} Suppose we are given a set of points $\bm{x}[j]$ and smooth 
quadrature weights $w[j]$ for $j=1,\ldots,N$ that discretize 
the support of $f(\bm{x})$ 
in the domain $\Omega$ (whether it is an interior or exterior problem). 
Suppose we are also given a set of points $\bm{x}_{\Gamma}[j']$ and weights
$w_\Gamma[j']$ for $j'=1,\ldots,N_B$ that discretize smooth functions
restricted to the boundary $\Gamma$.
Let $\delta_1 = C \Delta x^2$ where
$\Delta x$ is the average grid spacing in the discretization. (We
typically choose $C = 3$.) Let $\delta_2 = 32 \, \delta_1$. For a tolerance
$\epsilon$,
compute the smallest $k^{NH}_{max}$, $k^{FH}_{max}$ such that 
\[ k^{NH}_{max} e^{-\left(k^{NH}_{max}\right)^2 \delta_1} < \epsilon \quad , \quad
k^{FH}_{max} e^{-\left(k^{FH}_{max}\right)^2 \delta_2} < \epsilon.
\]

\vspace{.1in}

\noindent
{{\bf Step 1}:} 
Compute $n^{NH}_f$ and $n^{FH}_f$ according to \eqref{nfNHest} and
\eqref{nfFHest}, to achieve about twice the Nyquist sampling frequency:
\begin{equation}
n^{NH}_f \equiv  \frac{2 \sqrt{\log(1/\epsilon)}}{\pi \, \sqrt{\delta_1}}  
\quad , \quad
n^{FH}_f \equiv  \frac{4 \sqrt{2} \sqrt{\log(1/\epsilon)}}{\pi \, \sqrt{\delta_2}}.
\label{historytol}
\end{equation}

Let $(k^{NH}_{l,1},k^{NH}_{l,2}) = 
\bm{k}^{NH}[l]$ denote the uniformly spaced grid points on the 
square $[-k^{NH}_{max},k^{NH}_{max}]^2$ with $n^{NH}_f$ points in each linear
dimension. Let $N^{NH}_f = (n^{NH}_f)^2$ denote the total number of discrete
Fourier modes for the near history.
Similarly, let 
$(k^{FH}_{l,1},k^{FH}_{l,2}) = \bm{k}^{FH}[l]$ 
denote the uniformly spaced grid points on the 
square $[-k^{FH}_{max},k^{FH}_{max}]^2$ with $n^{FH}_f$ points in each linear
dimension. Let $N^{FH}_f = (n^{FH}_f)^2$ denote the total number of discrete
Fourier modes for the far history.

\vspace{.1in}

\noindent
{{\bf Step 2}:} Compute the discrete Fourier transform
\[
\begin{aligned}
\hat{u}_{NH}[l] = \qquad \qquad
& \sum_{j=1}^N e^{i \bm{k}[l] \cdot \bm{x}[j]} \left(f(\bm{x}[j]) w[j]\right) + \\
\qquad
& \sum_{j'=1}^{N_B} 
e^{i \bm{k}[l] \cdot \bm{x}_\Gamma[j]} 
\left( \sigma(\bm{x}_\Gamma[j']) w_\Gamma[j'] \right) + \\
  i k_{l,1} 
& \sum_{j'=1}^{N_B} 
e^{i \bm{k}[l] \cdot \bm{x}_\Gamma[j']} 
\left( \nu_1(\bm{x}_\Gamma[j']) \mu(\bm{x}_\Gamma[j']) w_\Gamma[j'] \right) + \\
  i k_{l,2} 
& \sum_{j'=1}^{N_B} 
e^{i \bm{k}[l] \cdot \bm{x}_\Gamma[j']} 
\left( \nu_2(\bm{x}_\Gamma[j']) \mu(\bm{x}_\Gamma[j']) w_\Gamma[j'] \right)
\end{aligned}
\]
where $\bm{k}[l] = (k_{l,1},k_{l,2}) = \bm{k}^{NH}[l]$
for $l = 1,\ldots,N^{NH}_f$. 

The NUFFT computes precisely such sums, requiring
$O( N_f \log N_f + N + N_B)$ work for $N_f$ Fourier modes,
and there are a number of open source packages for this purpose.
We use the high performance FINUFFT package \cite{finufft,finufftlib}.

Note that the volumetric and single layer contributions 
(involving $f(\bm{x})$ and $\sigma(\bm{x}_\Gamma)$) require one NUFFT call.
For the double layer potential
(involving $\mu(\bm{x}_\Gamma)$), two calls are required
because the third sum involves post-multiplication with
$i k_{l,1}$, and the fourth sum involves post-multiplication with
$i k_{l,2}$.

\vspace{.1in}

\noindent
{{\bf Step 3}:} 
Repeat with
$\bm{k}[l'] = \bm{k}^{FH}[l']$
for $l' = 1,\ldots,N^{FH}_f$ to obtain
$\hat{u}_{FH}[l']$.

\vspace{.1in}

\noindent
{{\bf Step 4}:} 
For every point $\bm{x}$ of interest, 
compute the near and far history contributions: 
\[
u^{NH}(\bm{x}) = 
 \sum_{l = 1}^{N_f^{NH}} 
\hat{u}_{NH}[l]   
\left[
\frac{ e^{- \|\bm{k}[l]\|^2 \delta_1}- e^{- \|\bm{k}[l]\|^2 \delta_2}}{\|\bm{k}[l]\|^2}
\right]
e^{-i \bm{k}[l] \cdot \bm{x}}
\]
where $\bm{k}[l] = \bm{k}^{NH}[l]$, and
\[
{u}^{FH}(\bm{x}) = 
 \sum_{l' = 1}^{N_f^{FH}} 
\hat{u}_{FH}[l'] \,  
W(\bm{k}[l'])\,
e^{-i \bm{k}[l'] \cdot \bm{x}}
\]
where $\bm{k}[l'] = \bm{k}^{FH}[l']$.
The point $\bm{x}$ can lie anywhere in the domain $\Omega$ or on its boundary,
 but within the enclosing box $B$. If $N_{tot}$ denotes the total number of such
target points, the cost is of the order
$O\left(N_f^{NH} \log N_f^{NH} + N_{tot}\right)$ +
$O\left(N_f^{FH} \log N_f^{FH} + N_{tot}\right)$ using the NUFFT.

\begin{remark} \label{r:twolevel}
A simpler algorithm would be to merge the near and far history parts together,
by evaluating the expression for the far history part alone with 
$\delta_2 = \delta_1$. This, however, would require setting 
\[
n^{FH}_f \equiv  \frac{4 \sqrt{2} \sqrt{\log(1/\epsilon)}}{\pi \, \sqrt{\delta_1}},
\]
with a computational cost that is approximately $8 = (2\sqrt{2})^2$ times more
expensive than the near history component above. By setting 
$\delta_2 > 8 \delta_1$, the additional burden is at most a factor of two, 
resulting in a net speedup by a factor of 4 
(as well as reduced storage requirements).
\end{remark}
\section{Rapid evaluation of local heat potentials}\label{s:local}
In this section, we present explicit asymptotic expansions for 
$\mathcal{S}_{L}[\sigma](\bm{x})$, $\mathcal{D}_{L}[\mu](\bm{x})$, and 
$\mathcal{V}_{L}[f](\bm{x})$ for orders up to $\delta^{5/2}$. 
Since we have fixed $\delta_1$ to be of the order $O(\Delta x^2)$, these 
formulas yield fifth order accuracy in $\Delta x$ without additional effort.
The target point $\bm{x}\in\mathbb{R}^{2}$ 
may either be on the boundary, in the interior of $\Omega$ or in its 
exterior $\Omega^E$. For points off boundary, as discussed in
\cref{s:discrete}, we assume there is a
unique closest point $\bm{b} = (b_{1},b_{2})$ on $\partial\Omega$. 
Letting  
$\bm{\nu}_{b}=\bm{\nu}(\bm{b})$ be the outward normal at $\bm{b}$, and 
$r = \|\bm{b}-\bm{x}\|_{2}$, we have that
$\bm{x}=\bm{b}-{\rm sgn}(\bm{x}) r \bm{\nu}_{b}$, where 
${\rm sgn}(\bm{x}) = 1,0,-1$ for 
$\bm{x}$ in $\Omega$, $\partial \Omega$, $\Omega^E$, respectively.

For a given target $\bm{x}=(x_{1},x_{2})$ with closest boundary point $\bm{b}$, the
asymptotic analysis is simplest in a coordinate system obtain by 
rotation and translation so that the tangent line
at $\bm{b}$ is aligned with the $x_1$ direction, and translated such that: $\bm{b}$ lies at the origin for layer potentials and $\bm{x}$ lies at the origin for volume potentials. We then reparametrize
the curve $\Gamma$ locally as $x_2 = \gamma(x_1)$ and reparametrize the densities
$\sigma(\bm{x}_\Gamma)$, $\mu(\bm{x}_\Gamma)$ as functions along the tangent line
as well (depicted in \cref{fig-discrete}). 
More precisely, let $\sigma_b = \sigma(\bm{b})$ and $\mu_b = \mu(\bm{b})$, and denoting by $\xi$ the excursion along the tangent
line, we have $x_1(\xi) = \xi$,
\begin{align}
x_2(\xi) &\approx 
\frac12 \gamma^{\prime\prime} \xi^2 + \frac16 \gamma^{(3)} \xi^3 +\frac1{24} \gamma^{(4)} \xi^4 + \dots
\nonumber \\
\sigma(\xi) &\approx 
\sigma_b + \sigma_b^{\prime} \xi + \frac12 \sigma_b^{\prime\prime} \xi^2 + \dots
\label{tanparams} \\
\mu(\xi) &\approx 
\mu_b + \mu_b^{\prime} \xi + \frac12 \mu_b^{\prime\prime} \xi^2 + \dots
\nonumber 
\end{align}
Note that in this parametrization, $\gamma^{\prime\prime}$ is the curvature of the boundary
at $\bm{b}$. This transition is particularly simple given the 
chordal parametrization.

\begin{lemma} \label{lemma:slp}
For $\bm{x} \in \mathbb{R}^2$, let $\bm{b}$ denote the closest point on 
the boundary $\partial\Omega$, let $c = r/\sqrt{\delta}$, and let $\kappa_b$
denote the curvature of the boundary at $\bm{b}$. 
Let $\sigma_b = \sigma(\bm{b})$ and let $\delta = \delta_1$ denote the 
short-time cutoff in the definition \eqref{sheatloc}. Then
\begin{align}\label{eq:slplocO5}
  \begin{split}
  \mathcal{S}_{L}[\sigma](\bm{x}) &= \sqrt{\delta }\frac{\sigma_{b}}{2}\left(c \,\mathrm{erfc}\left(\frac{c}{2}\right)-\frac{2 e^{-\frac{c^2}{4}}}{\sqrt{\pi }}\right)+
{\rm sgn}(\bm{x}) 
\delta \frac{c  \kappa_{b} \sigma_{b} }{4}\left(c \,\mathrm{erfc}\left(\frac{c}{2}\right)-\frac{2 e^{-\frac{c^2}{4}}}{\sqrt{\pi }}\right)\\
   &+\delta ^{3/2}\frac{\sigma_{b} \kappa_{b}^2}{12} \left(2 c^3 \mathrm{erfc}\left(\frac{c}{2}\right)-\frac{\left(4 c^2+1\right) e^{-\frac{c^2}{4}}}{\sqrt{\pi }}\right) +  \delta ^{3/2}\frac{\sigma^{\prime\prime}_{b}}{12}\left(\frac{2 \left(c^2-2\right) e^{-\frac{c^2}{4}}}{\sqrt{\pi }}-c^3 \mathrm{erfc}\left(\frac{c}{2}\right)\right)\\
   &+\delta ^2 {\rm sgn}(\bm{x}) \left[
\frac{c  \kappa_{b}^3 \sigma_{b} }{16} \left(3 c^3 \mathrm{erfc}\left(\frac{c}{2}\right)-\frac{\left(6 c^2-2\right) e^{-\frac{c^2}{4}}}{\sqrt{\pi }}\right) + \frac{c  \kappa_{b} \sigma^{\prime\prime}_{b}}{8} \left(\frac{2 \left(c^2-2\right) e^{-\frac{c^2}{4}}}{\sqrt{\pi }}-c^3 \mathrm{erfc}\left(\frac{c}{2}\right)\right) \right] \\
   & + 
\delta ^2 {\rm sgn}(\bm{x}) \left[ 
\frac{c \sigma_{b} \gamma_{b}^{(4)}}{48}\!\left(\frac{2 \left(c^2-2\right) e^{-\frac{c^2}{4}}}{\sqrt{\pi }}-c^3 \mathrm{erfc}\left(\frac{c}{2}\right)\right) + \frac{c \sigma^{\prime}_{b} \gamma_{b}^{(3)}}{12}\!\left(e^{-\frac{c^2}{4}}\frac{(2 c^2-4)}{\sqrt{\pi}} - c^3 \mathrm{erfc}\left(\frac{c}{2}\right)\right) \right] \\
   &+O\!\left(\delta^{\frac{5}{2}}\right),
\end{split}  
\end{align}  
where $\mathrm{erf}$ denotes the error function and
$\mathrm{erfc}$ denotes the complementary error function.
Here, $\gamma_{b}^{(3)}$, and $\gamma_{b}^{(4)}$, denote the third and fourth derivatives of the curve $\partial\Omega$ at $\bm{b}$ with respect to 
the tangent line parametrization.
$\sigma_b^{\prime}$ and $\sigma_b^{\prime\prime}$ are the first and second derivative of the density, also in the
tangent line parametrization at $\bm{b}$.
\end{lemma}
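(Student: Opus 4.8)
The plan is to compute $\mathcal{S}_L[\sigma](\bm{x})$ directly from its definition \eqref{sheatloc} as a two-fold integral over $\partial\Omega$ and over $t\in[0,\delta]$, localizing everything to a neighborhood of the closest boundary point $\bm{b}$. First I would set up the local coordinate frame described before the lemma statement: rotate and translate so that $\bm{b}$ is at the origin, the tangent line at $\bm{b}$ is the $x_1$-axis, and $\bm{x} = -{\rm sgn}(\bm{x})\, r\,\bm{\nu}_b = (0, {\rm sgn}(\bm{x}) r)$ lies on the $x_2$-axis (since $\bm{\nu}_b$ points in the $-x_2$ direction after rotation). Parametrize $\partial\Omega$ locally by the tangential excursion $\xi$, so that $\bm{x}' = (\xi, \gamma(\xi))$ with $\gamma(\xi) = \tfrac12\gamma''\xi^2 + \tfrac16\gamma^{(3)}\xi^3 + \tfrac1{24}\gamma^{(4)}\xi^4 + \cdots$ as in \eqref{tanparams}, and the arclength element is $\mathrm{d}s_{\bm{x}'} = \sqrt{1+\gamma'(\xi)^2}\,\mathrm{d}\xi = (1 + \tfrac12\gamma'(\xi)^2 + \cdots)\,\mathrm{d}\xi$. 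Also expand the density $\sigma(\xi) = \sigma_b + \sigma_b'\xi + \tfrac12\sigma_b''\xi^2 + \cdots$.

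The core of the argument is an asymptotic evaluation of
\[
\mathcal{S}_L[\sigma](\bm{x}) = \int_0^{\delta}\!\!\int \frac{e^{-\left(\xi^2 + (\gamma(\xi) - {\rm sgn}(\bm{x}) r)^2\right)/4t}}{4\pi t}\, \sigma(\xi)\sqrt{1+\gamma'(\xi)^2}\,\mathrm{d}\xi\,\mathrm{d}t.
\]
The plan is to rescale $\xi = \sqrt{\delta}\,\eta$ and $t = \delta s$, so that the exponent becomes $-\bigl(\delta\eta^2 + (\gamma(\sqrt\delta\eta) - {\rm sgn}(\bm{x}) r)^2\bigr)/(4\delta s)$. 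Writing $c = r/\sqrt\delta$, the quantity $\gamma(\sqrt\delta\eta) - {\rm sgn}(\bm{x}) r = -{\rm sgn}(\bm{x})\sqrt\delta\, c + \tfrac12\gamma''\delta\eta^2 + \cdots$, so after squaring and dividing by $\delta$ the exponent is $-\bigl(\eta^2 + c^2\bigr)/(4s)$ at leading order, with corrections in powers of $\sqrt\delta$ coming from the curvature terms, the arclength Jacobian, and the density Taylor expansion. I would expand the entire integrand (the exponential cross-terms via $e^{\text{small}} = 1 + \text{small} + \cdots$, together with $\sigma(\xi)\sqrt{1+\gamma'^2}$) as a formal power series in $\sqrt\delta$, collecting terms through order $\delta^2$ relative to the $\sqrt\delta$ prefactor. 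Each resulting term is a product of $\delta^{k/2}$ with an integral of the form $\int_0^1\!\!\int_{\mathbb{R}} \eta^m\, s^{-n}\, e^{-(\eta^2+c^2)/4s}\,\mathrm{d}\eta\,\mathrm{d}s$; the $\eta$-integral produces Gaussian moments (powers of $\sqrt s$ times polynomials), and the remaining $s$-integral over $[0,1]$ produces exactly the combinations $c^j\,\mathrm{erfc}(c/2)$ and $(\text{poly in }c)\,e^{-c^2/4}/\sqrt\pi$ that appear in \eqref{eq:slplocO5} — these are standard: e.g. $\int_0^1 s^{-1/2} e^{-c^2/4s}\mathrm{d}s$ and its relatives reduce to incomplete Gamma / error functions after the substitution $u = c/(2\sqrt s)$.

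The two points requiring care — and the main obstacle — are bookkeeping and justification of the truncation. The bookkeeping is that several distinct sources contribute at each order: the curvature $\kappa_b = \gamma''$ enters linearly at $O(\delta)$, quadratically at $O(\delta^{3/2})$, cubically at $O(\delta^2)$; the higher curve derivatives $\gamma^{(3)}, \gamma^{(4)}$ first appear at $O(\delta^2)$; and $\sigma_b', \sigma_b''$ enter at their own orders — and one must verify that the ${\rm sgn}(\bm{x})$ factors attach to exactly the odd-in-$r$ pieces (those carrying an odd total power of the signed normal distance ${\rm sgn}(\bm{x})r$), which is why the $\sqrt\delta$ and $\delta^{3/2}$ terms with $\sigma_b''$ and $\kappa_b^2$ carry no sign while the $\delta$, $\delta^2$ terms do. The analytical subtlety is that the integral is genuinely over all of $\partial\Omega$, not just the local patch, so I would first argue that the contribution from $|\xi| \gtrsim \sqrt\delta\,|\log\delta|^{1/2}$ is $O(\delta^\infty)$ because of the Gaussian decay of the heat kernel in $t \le \delta$ (this is the same reasoning used for the $12\sqrt\delta$ cutoff in \cref{s:discrete}), after which the local Taylor expansions are legitimate and Watson's-lemma-type estimates control the remainder uniformly in $c$ on bounded sets, giving the stated $O(\delta^{5/2})$ error. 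I expect the sign-tracking and the verification that no spurious lower-order terms survive to be the most error-prone part; the actual moment integrals are mechanical (and were surely done with a computer algebra system), so in the write-up I would present the rescaling, the structure of the expansion, one representative term worked through to show the $\mathrm{erfc}$/$e^{-c^2/4}$ pattern emerging, and then state that collecting all contributions through $O(\delta^2)$ yields \eqref{eq:slplocO5}.
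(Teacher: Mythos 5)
Your proposal is correct and takes essentially the same route as the paper: set up the local tangent-line frame at $\bm{b}$, Taylor-expand the curve, density, and arclength element, rescale to pull out the $\delta$-dependence, and reduce the resulting Gaussian moment integrals in $t$ to $\mathrm{erfc}$ and incomplete-gamma expressions. The only cosmetic difference is the change of variables: the paper uses $z=\sqrt{4t}$, $u=\xi/z$ (so the Gaussian weight $e^{-u^2}$ appears directly and the $\delta$-asymptotics come from the $z$-integral over $[0,\sqrt{4\delta}]$), whereas you rescale $\xi=\sqrt{\delta}\,\eta$, $t=\delta s$; these are equivalent up to the identification $z=2\sqrt{\delta s}$, $u=\eta/(2\sqrt{s})$, and both lead to the same moment integrals and the same $O(\delta^{5/2})$ remainder bound.
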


\begin{proof}
The formulas follow from a straightforward but tedious calculation using 
the change of variables
$z =\sqrt{4t}$, $u = \xi/z$, and expanding all terms in
\eqref{sheatloc} to sufficiently high order.
\end{proof}

Note that $c = r/\sqrt{\delta}$ is of the order $O(1)$ even as 
$\delta \rightarrow 0$ at any location where the asymptotic formula will be 
invoked, because $\calS_L[\sigma]$ has decayed to machine precision
once the target point is about $12 \sqrt{\delta}$ away from the boundary.

\begin{corollary}
In the preceding lemma, when $\bm{x} \in \partial \Omega$, we have the simpler
formula
\begin{equation}\label{eq:slplocO5onsurf}
    \mathcal{S}_{L}[\sigma](\bm{x}) = -\sqrt{\delta}\frac{\sigma_{b}}{\sqrt{\pi}} - \delta^{\frac{3}{2}}\frac{1}{3\sqrt{\pi}}\left(\frac{\kappa_{b}^{2}\sigma_{b}}{4} + \sigma_{b}^{\prime\prime}\right) + O\!\left(\delta^{\frac{5}{2}}\right).
\end{equation}
\end{corollary}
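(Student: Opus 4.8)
The plan is to specialize the general asymptotic expansion in \cref{lemma:slp} to the case $\bm{x} \in \partial\Omega$, which forces $r = 0$, hence $c = r/\sqrt{\delta} = 0$ and ${\rm sgn}(\bm{x}) = 0$. The strategy is simply to substitute these two values into \eqref{eq:slplocO5} and evaluate the resulting limits of the various combinations of $\mathrm{erfc}$ and exponential factors term by term, retaining everything through order $\delta^{3/2}$.

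First I would note that every term in \eqref{eq:slplocO5} carrying the prefactor ${\rm sgn}(\bm{x})$ — that is, both $\delta^1$ terms and all four $\delta^2$ terms — vanishes identically once we set ${\rm sgn}(\bm{x}) = 0$. This leaves only the three surviving contributions: the $\sqrt{\delta}$ term $\sqrt{\delta}\,\frac{\sigma_b}{2}\bigl(c\,\mathrm{erfc}(c/2) - \frac{2}{\sqrt{\pi}}e^{-c^2/4}\bigr)$, the $\delta^{3/2}$ curvature term with coefficient $\frac{\sigma_b\kappa_b^2}{12}$, and the $\delta^{3/2}$ second-derivative term with coefficient $\frac{\sigma_b''}{12}$.

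Next I would evaluate each of these at $c = 0$, using $\mathrm{erfc}(0) = 1$ and $e^{0} = 1$. For the leading term, $c\,\mathrm{erfc}(c/2) \to 0$ and $-\frac{2}{\sqrt\pi}e^{-c^2/4} \to -\frac{2}{\sqrt\pi}$, so the bracket tends to $-\frac{2}{\sqrt\pi}$ and the term becomes $-\sqrt\delta\,\frac{\sigma_b}{\sqrt\pi}$. For the curvature term, $2c^3\mathrm{erfc}(c/2) \to 0$ and $-\frac{(4c^2+1)}{\sqrt\pi}e^{-c^2/4} \to -\frac{1}{\sqrt\pi}$, giving $\delta^{3/2}\frac{\sigma_b\kappa_b^2}{12}\cdot\bigl(-\frac{1}{\sqrt\pi}\bigr) = -\delta^{3/2}\frac{\kappa_b^2\sigma_b}{12\sqrt\pi}$. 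For the density term, $\frac{2(c^2-2)}{\sqrt\pi}e^{-c^2/4} \to -\frac{4}{\sqrt\pi}$ and $-c^3\mathrm{erfc}(c/2) \to 0$, giving $\delta^{3/2}\frac{\sigma_b''}{12}\cdot\bigl(-\frac{4}{\sqrt\pi}\bigr) = -\delta^{3/2}\frac{\sigma_b''}{3\sqrt\pi}$. Combining the two $\delta^{3/2}$ pieces over the common factor $-\frac{\delta^{3/2}}{3\sqrt\pi}$ yields $-\frac{\delta^{3/2}}{3\sqrt\pi}\bigl(\frac{\kappa_b^2\sigma_b}{4} + \sigma_b''\bigr)$, which together with the leading term and the unchanged $O(\delta^{5/2})$ remainder is exactly \eqref{eq:slplocO5onsurf}.

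There is essentially no obstacle here: the corollary is a direct and elementary specialization, and the only thing to be careful about is bookkeeping — making sure that the $O(\delta^{5/2})$ error term in \cref{lemma:slp} genuinely remains $O(\delta^{5/2})$ on the boundary (it does, since setting $c=0$ and ${\rm sgn}(\bm{x})=0$ cannot enlarge it), and that no $\delta^2$-order contribution survives (none does, precisely because every such term in \eqref{eq:slplocO5} is proportional to ${\rm sgn}(\bm{x})$). If one wanted an independent check, one could instead redo the proof of \cref{lemma:slp} from \eqref{sheatloc} directly with $r = 0$, where the symmetry of the integral over $\xi$ kills all odd-order terms in the Taylor expansions — which is the structural reason the on-surface formula is so much shorter — but this is not necessary for the proof.
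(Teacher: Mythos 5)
Your proof is correct and is precisely what the corollary's status (stated without proof following \cref{lemma:slp}) implies: specialize the lemma to $c=0$ and ${\rm sgn}(\bm{x})=0$, observe that all $\delta^1$ and $\delta^2$ terms vanish because they carry a ${\rm sgn}(\bm{x})$ prefactor, and evaluate the three surviving terms using $\mathrm{erfc}(0)=1$ and $e^0=1$. Your term-by-term arithmetic and the final recombination into the common factor $-\delta^{3/2}/(3\sqrt{\pi})$ both check out, and your remark that the $O(\delta^{5/2})$ remainder is preserved under specialization is a worthwhile (if minor) point of care.
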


\begin{lemma}
Under the same hypotheses as in \cref{lemma:slp}, 
let $\mu_b = \mu(\bm{b})$, and let $\mu_b^{\prime}$,
$\mu_b^{\prime\prime}$,  $\mu_{b}^{(3)}$, and $\mu_{b}^{(4)}$ denote its first, second, third, and fourth derivatives
in the tangent parametrization at $\bm{b}$.
Then, for $\bm{x} \in \mathbb{R}^2$,
\begin{align}\label{eq:dlplocO5}
  \begin{split}
      \mathcal{D}_{L}[\mu] &(\bm{x}) 
 = {\rm sgn}(\bm{x}) \frac{\mu_{b}}{2} \mathrm{erfc}\left(\frac{c}{2}\right) 
+ \sqrt{\delta }\frac{\kappa_{b} \mu_{b}}{2 \sqrt{\pi }}e^{-\frac{c^2}{4}}
+\delta\, {\rm sgn}(\bm{x}) \left[ 
\frac{3 c \kappa_{b}^2 \mu_{b} }{8 \sqrt{\pi }}e^{-\frac{c^2}{4}} 
+ \frac{c  \mu^{\prime\prime}_{b}}{4}  \left(\frac{2 e^{-\frac{c^2}{4}}}{\sqrt{\pi }}-c\, \mathrm{erfc}\left(\frac{c}{2}\right)\right) \right] \\
&+\delta^{3/2}\frac{5 \left(c^2-2\right) \kappa_{b}^{3} \mu_{b}}{16 \sqrt{\pi }}e^{-\frac{c^2}{4}} + \delta ^{3/2}\frac{\gamma^{(4)}_{b}\mu_{b}}{4 \sqrt{\pi }}e^{-\frac{c^2}{4}}  \\
&+\delta^{3/2}\frac{ \kappa_{b} \mu^{\prime\prime}_{b}}{4}\left(\frac{2 \left(c^2+1\right) e^{-\frac{c^2}{4}}}{\sqrt{\pi }}-c^3 \mathrm{erfc}\left(\frac{c}{2}\right)\right)+ \delta ^{3/2}\frac{\gamma^{(3)}_{b} \mu^{\prime}_{b}}{12} \left(\frac{2e^{-\frac{c^2}{4}} \left(c^2+4\right)}{ \sqrt{\pi }}-c^3 \mathrm{erfc}\left(\frac{c}{2}\right)\right) \\
&+ 
\delta^2 {\rm sgn}(\bm{x})  \left[ 
\frac{35 c  \kappa_{b}^4 \mu_{b} \left(c^2-6\right) }{128 \sqrt{\pi }}e^{-\frac{c^2}{4}}  
+ \frac{5 c\left(\gamma^{(3)}_{b}\right)^{2} \mu_{b} }{12 \sqrt{\pi }} e^{-\frac{c^2}{4}} 
+ \frac{5 c \kappa_{b}\gamma^{(4)}_{b} \mu_{b}}{8 \sqrt{\pi }}e^{-\frac{c^2}{4}}
\right]   \\
&+
\delta^2 {\rm sgn}(\bm{x}) 5c \left[ 
\frac{\kappa_{b}^2 \mu^{\prime\prime}_{b}}{16} \left(\frac{2 \left(c^{2}+1\right) e^{-\frac{c^2}{4}}}{\sqrt{\pi }}-c^{3} \mathrm{erfc}\left(\frac{c}{2}\right)\right)
+
 \frac{\kappa_{b} \gamma^{(3)}_{b} \mu^{\prime}_{b} }{24 }\left(\frac{2 \left(c^2+4\right)e^{-\frac{c^2}{4}}}{\sqrt{\pi }} - c^3  \mathrm{erfc}\left(\frac{c}{2}\right) \right) \right] \\
&+\delta^{2} {\rm sgn}(\bm{x}) 
\frac{c  \mu^{(4)}_{b}}{48}\left(c^3 \mathrm{erfc}\left(\frac{c}{2}\right)-\frac{2 \left(c^2-2\right) e^{-\frac{c^2}{4}}}{\sqrt{\pi }}\right)
+O\!\left(\delta^{\frac{5}{2}}\right).
  \end{split}
\end{align}
\end{lemma}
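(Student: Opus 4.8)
The plan is to imitate the argument behind \cref{lemma:slp}, the only genuinely new ingredient being the normal derivative of the heat kernel in \eqref{dheatloc}. First I would use the smallness of $\delta=\delta_1$: at times $t\le\delta$ the heat kernel is a Gaussian of width $O(\sqrt\delta)$ centred at $\bm{x}$, so for $\delta$ small enough the only part of $\partial\Omega$ contributing beyond the $O(\delta^{5/2})$ remainder is an $O(\sqrt\delta)$ neighbourhood of the closest point $\bm{b}$, where the closest-point projection is well defined (the hypothesis imposed in \cref{s:discrete}). On that neighbourhood I would pass to the local coordinates of \cref{s:local}: origin at $\bm{b}$, tangent line along the $x_1$-axis, $\bm{\nu}_b$ along the $x_2$-axis, curve $\bm{x'}(\xi)=(\xi,\gamma(\xi))$ with $\gamma(\xi)=\tfrac12\kappa_b\xi^2+\tfrac16\gamma_b^{(3)}\xi^3+\tfrac1{24}\gamma_b^{(4)}\xi^4+\cdots$, density $\mu(\xi)=\mu_b+\mu_b'\xi+\cdots$, and target $\bm{x}=-\mathrm{sgn}(\bm{x})\,r\,\bm{\nu}_b$. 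Writing the outward normal as $\bm{\nu}(\xi)=(-\gamma'(\xi),1)/\sqrt{1+\gamma'(\xi)^2}$ and $\mathrm{ds}_{\bm{x'}}=\sqrt{1+\gamma'(\xi)^2}\,\mathrm{d}\xi$, the two square roots cancel and, relative to \eqref{sheatloc}, the normal derivative supplies the extra factor $\bigl((\bm{x}-\bm{x'})\cdot\bm{\nu}(\bm{x'})\bigr)/(2t)$. A short expansion gives $(\bm{x}-\bm{x'})\cdot\bm{\nu}(\bm{x'})=-\mathrm{sgn}(\bm{x})\,r+\tfrac12\kappa_b\xi^2+\tfrac13\gamma_b^{(3)}\xi^3+\tfrac18\gamma_b^{(4)}\xi^4+\cdots$ and $\|\bm{x}-\bm{x'}\|^2=\xi^2+r^2+\mathrm{sgn}(\bm{x})\,r\,\kappa_b\xi^2+\cdots$, so that $\mathcal{D}_L[\mu](\bm{x})$ reduces, up to an exponentially small error, to $\int_0^\delta\!\int_{-\infty}^\infty \frac{1}{8\pi t^2}\,e^{-\|\bm{x}-\bm{x'}\|^2/4t}\,\bigl(\text{geometric factor}\bigr)\,\mu(\xi)\,\mathrm{d}\xi\,\mathrm{d}t$.

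Next I would substitute $z=\sqrt{4t}$ (so $t\in(0,\delta)$ corresponds to $z\in(0,2\sqrt\delta)$), $u=\xi/z$, and set $c=r/\sqrt\delta$, which is $O(1)$ wherever the formula is invoked. With $\xi=uz$, the density, the geometric factor, and the correction to $\|\bm{x}-\bm{x'}\|^2/4t$ beyond $u^2+r^2/z^2$ become polynomials in $u$ and $z$ (the last being $O(\sqrt\delta)$, hence pulled out of the exponential by Taylor expansion), and the integrand takes the form $e^{-u^2}e^{-r^2/z^2}$ times a Laurent polynomial in $z$ with $u$-polynomial coefficients; one keeps only enough terms to capture all contributions through $\delta^2$. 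The $u$-integrals are then elementary Gaussian moments, the odd ones vanishing, and each surviving $z$-integral $\int_0^{2\sqrt\delta}z^m e^{-r^2/z^2}\,\mathrm{d}z$ becomes, after $p=r/z$, $r^{m+1}\int_{c/2}^\infty p^{-m-2}e^{-p^2}\,\mathrm{d}p$; repeated integration by parts turns each of these into a $c$-polynomial combination of $\mathrm{erfc}(c/2)$ and $e^{-c^2/4}$. Since $r^{m+1}=c^{m+1}\delta^{(m+1)/2}$, collecting by power of $\delta$ and of $\mathrm{sgn}(\bm{x})$ produces exactly the terms displayed in \eqref{eq:dlplocO5}. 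As a check, the $m=-2$ integral with leading weight gives the $\mathrm{sgn}(\bm{x})\tfrac12\mu_b\,\mathrm{erfc}(c/2)$ term, which at $c=0$ recovers the half-jump $\pm\tfrac12\mu_b$ of \eqref{dlpjump}, and the next term $\sqrt\delta\,\kappa_b\mu_b\,e^{-c^2/4}/(2\sqrt\pi)$ is the expected leading curvature correction.

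The one real obstacle is bookkeeping, not analysis. Because the normal-derivative factor carries an extra $1/t$, the contribution that was merely $O(\delta^{5/2})$ for the single-layer potential is promoted (indeed $\mathcal{D}_L$ already has an $O(1)$ leading term, the jump, whereas $\mathcal{S}_L$ starts at $O(\sqrt\delta)$), so one must Taylor-expand $\gamma$, $\mu$, $\bm{\nu}$ and the exponent one order further than in \cref{lemma:slp}, track which monomials $u^jz^m$ survive the $u$-integration, and see which $z$-integrals then feed into each order in $\delta$; this is best carried out with computer algebra, as the authors indicate for these formulas. The other place to be careful is the sign bookkeeping tied to the chosen orientation of $\bm{\nu}_b$ and to the definition of $\mathrm{sgn}(\bm{x})$, which must be fixed consistently so that the jump term carries the sign shown in \eqref{eq:dlplocO5}. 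Beyond that, there is no difficulty absent from the single-layer case.
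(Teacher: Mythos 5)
Your proposal is correct and follows essentially the same route as the paper, whose proof of this lemma is a one-line remark that the formulas follow, as for the single layer case, by the change of variables $z=\sqrt{4t}$, $u=\xi/z$, and expansion of \eqref{dheatloc} to sufficiently high order. Your write-up simply makes explicit the extra normal-derivative factor $\bigl((\bm{x}-\bm{x'})\cdot\bm{\nu}(\bm{x'})\bigr)/(2t)$, the cancellation of the $\sqrt{1+\gamma'^2}$ factors, and the reduction of the resulting $z$-integrals to $\mathrm{erfc}$ and Gaussians, which is the bookkeeping the paper leaves to the reader.
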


\begin{proof}
The formulas follow, as for the single layer case, using the change of variables
$z =\sqrt{4t}$, $u = \xi/z$, expanding all terms 
\eqref{dheatloc} to sufficiently high order.
\end{proof}

\begin{corollary}
In the preceding lemma, when $\bm{x} \in \partial \Omega$, we have the simpler
formula
\begin{align}\label{eq:dlplocO5onsurf}
\mathcal{D}_{L}[\mu](\bm{x}) = \sqrt{\delta}\frac{\kappa_{b}\mu_{b}}{2\sqrt{\pi}} + \delta^{\frac{3}{2}}\left(-\frac{5\kappa_{b}^{3}\mu_{b}}{8\sqrt{\pi}} + \frac{\kappa_{b}\mu_{b}^{\prime\prime}}{2\sqrt{\pi}} + \frac{2\mu_{b}^{\prime}\gamma_{b}^{(3)}}{3\sqrt{\pi}} + \frac{\mu_{b}\gamma_{b}^{(4)}}{4\sqrt{\pi}}\right) + O\! \left(\delta^{\frac{5}{2}}\right).
\end{align}
\end{corollary}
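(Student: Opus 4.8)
The plan is to obtain \eqref{eq:dlplocO5onsurf} as the special case of the general expansion \eqref{eq:dlplocO5} in which the target lies on the boundary. When $\bm{x}\in\partial\Omega$ the closest point is $\bm{b}=\bm{x}$, so $r=\|\bm{b}-\bm{x}\|_2=0$, hence $c=r/\sqrt{\delta}=0$, and ${\rm sgn}(\bm{x})=0$ by the convention fixed in \cref{s:local}. Since \eqref{eq:dlplocO5} is asserted to hold for all $\bm{x}\in\mathbb{R}^2$, including boundary points, the corollary follows by substituting these two values into it.

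Performing the substitution: every term carrying an overall factor ${\rm sgn}(\bm{x})$ vanishes — this removes the leading ${\rm sgn}(\bm{x})\tfrac{\mu_b}{2}\mathrm{erfc}(c/2)$ term, which encodes the jump relation \eqref{dlpjump}, together with all of the $\delta$ and $\delta^2$ contributions — while the single $\sqrt{\delta}$ term and all four $\delta^{3/2}$ terms survive. Using $\mathrm{erfc}(0)=1$ and $e^{-c^2/4}\big|_{c=0}=1$, the $\sqrt{\delta}$ term reduces to $\tfrac{\kappa_b\mu_b}{2\sqrt{\pi}}$; and evaluating $\tfrac{5(c^2-2)}{16}\big|_{c=0}=-\tfrac58$, $\tfrac{2(c^2+1)}{4}\big|_{c=0}=\tfrac12$, $\tfrac{2(c^2+4)}{12}\big|_{c=0}=\tfrac23$, and carrying the $\gamma_b^{(4)}\mu_b/(4\sqrt{\pi})$ term through unchanged, the $\delta^{3/2}$ block collapses to $-\tfrac{5\kappa_b^3\mu_b}{8\sqrt{\pi}}+\tfrac{\kappa_b\mu_b^{\prime\prime}}{2\sqrt{\pi}}+\tfrac{2\mu_b^{\prime}\gamma_b^{(3)}}{3\sqrt{\pi}}+\tfrac{\mu_b\gamma_b^{(4)}}{4\sqrt{\pi}}$, as claimed.

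Two small points should be recorded for rigor. First, the integral defining $\mathcal{D}_L[\mu](\bm{x})$ at a boundary point is absolutely convergent and needs no principal-value interpretation: the normal derivative of the heat kernel contributes a factor $(\bm{x}-\bm{x}')\cdot\bm{\nu}_{\bm{x}'}$, which is $O(\|\bm{x}-\bm{x}'\|^2)$ along a smooth curve, and this cancels the $1/t$-type growth once $t$ is integrated over $[0,\delta_1]$. Second, one must check that the $O(\delta^{5/2})$ remainder in \eqref{eq:dlplocO5} remains valid at $c=0$; this is immediate because each coefficient of $c$ in the expansion is a polynomial in $c$ multiplied by $\mathrm{erfc}(c/2)$ or $e^{-c^2/4}$, hence smooth and bounded near $c=0$, so the bound quoted in the parent lemma applies there too.

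The "main obstacle" is thus purely cosmetic — the careful arithmetic of collapsing the long expression \eqref{eq:dlplocO5} at $c=0$ while correctly discarding the ${\rm sgn}$-weighted pieces. A reader preferring a self-contained derivation could instead repeat the argument of the parent lemma directly on the surface: put $\bm{b}=\bm{x}$ at the origin in \eqref{dheatloc} and apply the same change of variables $z=\sqrt{4t}$, $u=\xi/z$; with the target on the curve the odd-in-$\xi$ contributions integrate to zero and far fewer terms survive, so the bookkeeping is considerably shorter, with the same result.
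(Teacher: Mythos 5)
Your proof is correct and takes the same route the paper implicitly intends: substituting $c=0$ and ${\rm sgn}(\bm{x})=0$ into \eqref{eq:dlplocO5}, which kills the jump term and all of the $\delta$ and $\delta^2$ contributions, while the surviving $\sqrt{\delta}$ and $\delta^{3/2}$ coefficients reduce exactly as you computed. The two remarks you append (absolute convergence of $\mathcal{D}_L[\mu]$ at a boundary point, and smoothness of the coefficients near $c=0$ so the remainder bound persists) are a useful touch of rigor but do not change the argument.
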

Note that 
\[ 
\lim_{r \rightarrow 0^+}
\calD_L[\mu](\bm{b} \pm r \bm{\nu}({\bm{b}})) =
\mathcal{D}_{L}[\mu](\bm{b}) \pm \frac12 \mu(\bm{b}).
\]
That is, the jump relations for the harmonic double layer potential are
manifested in the local part of the double layer heat potential.

For the volume integral, we have the following asymptotic expansion.

\begin{lemma}
For $\bm{x} \in \Omega$, let $\bm{b}$ denote the closest point on 
the boundary $\partial\Omega$, let $c = r/\sqrt{\delta}$, and let $\kappa_b$
denote the curvature of the boundary at $\bm{b}$. 
Rotate and translate the domain $\Omega$ so that the $\xi$-direction
lies along the tangent line at $\bm{b}$ and the $\eta$-direction is the 
inward normal at $\bm{b}$, with $\bm{x}$ lying at the origin. 
In this coordinate system, let the Taylor series for the 
source distribution be given by
\[ 
f(\bm{x}) = f + f_\xi \xi + f_\eta \eta + \frac12 f_{\xi\xi} \xi^2 +
 f_{\xi\eta} \xi \eta + \frac12 f_{\eta\eta} \eta^2 + \dots
\]
and let $\delta = \delta_1$ denote the 
short-time cutoff in the definition \eqref{heatdecompdefs}. Then
\begin{align}
  \begin{split}
    \label{eq:vplocO5}
  \mathcal{V}_{L}[f](\bm{x}) &= \frac{\delta}{4} 
 \left(2 \mathrm{erf}\left(\frac{c}{2}\right)+\frac{2 e^{-\frac{c^2}{4}} c}{\sqrt{\pi }}- c^2 \mathrm{erfc}\left(\frac{c}{2}\right) +2\right) f \\
      &+ \delta ^{3/2}\frac{ \left((4-2 c^2)e^{-\frac{c^2}{4}}+\sqrt{\pi } c^3 \, \mathrm{erfc}\left(\frac{c}{2}\right)\right) \left(\kappa_{b}  f - 2 f_\eta \right)}{12 \sqrt{\pi }}\\
      &+ \frac{\delta^2}{48}\left(\mathrm{erfc}\left(\frac{c}{2}\right) \left(4 c^4 \kappa_{b}  f_{\eta}-3 \left(c^4+4\right) f_{\eta\eta}+\left(c^4-12\right) f_{\xi\xi}-3 c^4 \kappa_{b} ^2 f\right)\right.\\
                                 &\left.+\frac{2 c \left(c^2-2\right) e^{-\frac{c^2}{4}} \left(-4 \kappa_{b}  f_{\eta}+3 f_{\eta\eta}-f_{\xi\xi}+3 \kappa_{b} ^2 f\right)}{\sqrt{\pi }}+24 \left(f_{\xi\xi}+f_{\eta\eta}\right)\right)\\
                                 &+O\!\left(\delta^{\frac{5}{2}}\right).
  \end{split}
\end{align}
\end{lemma}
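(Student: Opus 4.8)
The plan is to compute $\mathcal{V}_L[f](\bm{x})$ directly from the definition \eqref{heatdecompdefs}, using the local coordinate system described in the statement (origin at $\bm{x}$, $\xi$-axis along the tangent at $\bm{b}$, $\eta$-axis the inward normal). In these coordinates the target point is the origin, the boundary point $\bm{b}$ sits at $(0,r)$, and the domain $\Omega$ is locally the region $\eta \geq \gamma(\xi)$ with $\gamma(\xi) = r + \tfrac12\kappa_b\xi^2 + \tfrac16\gamma_b^{(3)}\xi^3 + \cdots$ (matching \eqref{tanparams}, shifted so $\bm{b}$ is at height $r$). The heat kernel $e^{-(\xi^2+\eta^2)/4t}/(4\pi t)$ is radially symmetric about the origin, so the inner integral over $\Omega$ splits naturally: the contribution of the full plane gives the leading term, and we must subtract the contribution of the excluded half-plane-like region $\{\eta < \gamma(\xi)\}$. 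This is where $c = r/\sqrt{\delta}$ enters: after the substitution $z = \sqrt{4t}$ (so $t$ runs over $(0,\delta/4\cdot 4) $... more precisely $t \in (0,\delta_1)$ becomes $z \in (0,2\sqrt{\delta_1})$) and $u = \xi/z$, $w = \eta/z$, the cutoff distance $r$ to the boundary always appears rescaled by $\sqrt{\delta}$, and every resulting one-dimensional integral reduces to a Gaussian moment times an incomplete-Gamma/error-function factor in $c$.

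The key steps, in order, are: (1) write $\mathcal{V}_L[f](\bm{x}) = \int_0^{\delta_1}\!\!\int_{\eta\geq\gamma(\xi)} \frac{e^{-(\xi^2+\eta^2)/4t}}{4\pi t} f(\xi,\eta)\,d\xi\,d\eta\,dt$, insert the Taylor expansion of $f$ about the origin, and insert the Taylor expansion of the boundary curve $\gamma$; (2) perform the rescaling $z=\sqrt{4t}$, $\xi = zu$, $\eta = zw$, which turns the Jacobian and kernel into $\tfrac{1}{\pi}e^{-(u^2+w^2)}z\,du\,dw\,dz$ and converts the boundary into $w \geq \gamma(zu)/z = c z/(2\sqrt{\delta}) \cdot(\cdots)$ — here one must be careful to track which powers of $z$ survive and collect terms by the total power of $\delta$, since $z \sim \sqrt{\delta}$; (3) for the region of integration, split $\{w \geq \gamma(zu)/z\}$ as $\{w \geq r/z\}$ plus a curvature correction obtained by expanding the indicator, i.e.\ $\mathbf{1}[w \geq r/z] - \mathbf{1}[r/z > w \geq \gamma(zu)/z]$, and expand the second (thin) region's contribution in powers of $\kappa_b, \gamma_b^{(3)}, \gamma_b^{(4)}$; (4) carry out the $u$-integral (pure Gaussian moments, giving factors $1, 0, \tfrac12, \ldots$), then the $w$-integral (which produces $\mathrm{erf}$, $\mathrm{erfc}$, and $e^{-c^2/4}$ evaluations once combined with the $z$-integral), then the $z$-integral over $(0, 2\sqrt{\delta_1})$; (5) collect all contributions through order $\delta^2$ and simplify, checking that the $O(\delta^{5/2})$ remainder is uniform in $c$ (which it is, since $c$ is $O(1)$ wherever the formula is used, as the remark after \cref{lemma:slp} notes).

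The main obstacle will be the bookkeeping in step (3)–(5): because the boundary curve contributes at several orders simultaneously (the $\kappa_b$ term feeds both the $\delta^{3/2}$ and the $\delta^2$ lines, and $\kappa_b^2$, $\gamma_b^{(3)}$ interactions must be tracked), one has to expand the moving boundary of integration to third order in $\xi$ and keep cross terms between the curve expansion and the source expansion. Organizing this cleanly — ideally by first integrating the full-plane piece exactly, then writing the defect as a single iterated integral over the thin curvilinear sliver and Taylor-expanding its width — is what keeps the calculation finite. Everything after that is mechanical: each remaining integral is a product of a Gaussian moment in $u$ and an incomplete-Gamma-type integral in the combined $(w,z)$ variables, and the stated closed form follows by routine (if lengthy) simplification, exactly as asserted in the one-line proof the authors give. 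I would also sanity-check the result by taking $c \to 0$ (point $\bm{x}$ on the boundary), where it should collapse to a volume analogue of \eqref{eq:slplocO5onsurf}, and by taking $c \to \infty$ (point deep in the interior, boundary irrelevant), where only the leading $\tfrac{\delta}{4}(2+2) f = \delta f$ term and the interior Laplacian-type corrections $\tfrac{\delta^2}{48}\cdot 24(f_{\xi\xi}+f_{\eta\eta}) = \tfrac{\delta^2}{2}(f_{\xi\xi}+f_{\eta\eta})$ should remain, consistent with $\int_0^{\delta_1}\!\int_{\mathbb{R}^2} K(\bm{x-x'},t) f(\bm{x'})$ being $\delta_1 f(\bm{x}) + \tfrac{\delta_1^2}{2}\Delta f(\bm{x}) + \cdots$.
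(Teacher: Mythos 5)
Your plan is essentially the paper's proof: the same change of variables $z=\sqrt{4t}$, $u=\xi/z$, $v=\eta/z$ (you call it $w$), the same Taylor expansions of $f$ and the boundary curve $\gamma$, and the same Gaussian-moment and upper-incomplete-gamma identities to evaluate the resulting iterated integrals term by term through $O(\delta^2)$. The additional organizing device you introduce (decomposing the integration region into the half-plane $\{v\ge r/z\}$ plus a thin curvilinear defect, then Taylor-expanding the defect's width) is a sensible way to tame the bookkeeping but is equivalent to expanding the lower limit $\gamma(uz)/z$ directly in the paper's \eqref{eq:vlocuint}; minor slip aside ($r/z = c\sqrt{\delta}/z$, not $cz/(2\sqrt{\delta})$), the route and the sanity checks you propose are correct.
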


\begin{proof}
With the change of variables $z = \sqrt{4t}$, $u = \xi/z$, $v= \eta/z$, we can write
the local part of the volume heat potential from
\eqref{heatdecompdefs} as
\begin{equation}\label{eq:vlocuint}
  \mathcal{V}_{L}[f](\bm{x}) = \frac{1}{2\pi}\int_{0}^{\sqrt{4\delta}}\!\int_{-\infty}^{\infty}\int_{\frac{\gamma(uz)}{z}}^{\infty}\! z\,e^{-u^{2}}e^{-v^{2}}f(uz,vz)\,\mathrm{d}v\,\mathrm{d}u\,\mathrm{d}z.
\end{equation}
The asymptotic expansion follows after a modest amount of algebra, using the
relations
\begin{equation}\label{eq:spatmom2d}
  \int_{-\infty}^{\infty} e^{-u^{2}}u^{n}\,\mathrm{d}u = 
\left\{
\begin{array}{ll}
\frac{(2m)!}{4^{m}m!}\sqrt{\pi} & \text{for}\ n=2m,\ m\in \mathbb{N}, \\
& \\
0  & \text{for}\ n=2m+1\, ,
\end{array}
\right.
\end{equation}
and
\begin{equation}\label{eq:myYint}
  \int_{a}^{\infty}\!e^{-u^{2}}u^{n}\,\mathrm{d}u = \frac{1}{2}\Gamma\! \left(\frac{1+n}{2},a^{2}\right),
\end{equation}
where $\Gamma(s,x)$ is the upper incomplete gamma function.
\end{proof}

For target points $\bm{x}$ away from the boundary (at a distance
greater than $12\sqrt{\delta}$), we may ignore exponentially small terms and obtain
the simpler formula
\begin{equation}
    \mathcal{V}_{L}[f](\bm{x}) = \delta f + \frac{\delta^{2}}{2} \left(f_{\xi\xi}+f_{\eta\eta}\right) + O\!\left(\delta^{3}\right).
\end{equation}
For target points $\bm{x}$ {\em on} the boundary $\partial\Omega$, we have 
\begin{equation}
    \mathcal{V}_{L}[f](\bm{x}) = \frac{\delta}{2}  f +  
 \frac{\delta ^{3/2}}{3 \sqrt{\pi }}\left(2 f_{\eta}-\kappa_{b}  f\right)+
\frac{\delta ^2}{4}\left( f_{\xi\xi}+ f_{\eta\eta}\right)+ O\!\left(\delta^{\frac{5}{2}}\right).
  \end{equation}

\begin{remark} \label{rmk:asymp}
Asymptotic expansions of local {\em layer} heat potentials have been 
described in previous papers, including
\cite{gs:fhp,li2009sisc,strainfpt,wang2019acom}. 
Using slightly different language, they 
have also been used
to develop high order quadrature methods based on 
{\em regularized kernels} combined with local correction or Richardson
extrapolation
(see, for example, 
\cite{bealetlupova2024,bealetlupova2024b,beale2016,cortez2001,cortez2005}
and the references therein).
An important feature of all these methods, including our own,
is that they address both the on-surface singular quadrature problem and
the off-surface ``close-to-touching" problem (that is the 
issue of evaluating nearly singular layer potentials) 
using only smooth quadrature rules. 
The uniform treatment of on and off-surface quadrature 
holds true for QBX (quadrature by expansion)
\cite{af2018adaptive,ABarnettquad,klockner_2013b,KLOCKNER2013332,Siegel2018}
as well. QBX involves constructing a local expansion for a layer potential 
centered at a point off-surface, with a radius of convergence which 
reaches back to the boundary itself. 
It involves only smooth quadrature but with 
some degree of oversampling and some ill-conditioning because of the (nearby)
Green's function singularity. We should also note the method of 
\cite{CARVALHO2018327} which, given accurate on-surface values, 
constructs an asymptotic expansion for 
close-to-touching points in terms of distance to the boundary.

Finally, we should observe that in
two dimensions, there are numerous highly effective quadrature methods 
that deal with the Green's function singularity directly, 
which we do not seek to review here (see
\cite{BarnettVeeraWuquad,ggq1,hao2014high,HelsingOjala}). 
In three dimensions, however, our scheme is virtually unchanged, while direct
quadrature methods are much less efficient in terms of the associated constants.

An important difference between the approach presented in this paper 
and earlier
work (excepting \cite{strainfpt}) is that we do not view the task as one of
regularizing the singularity in the harmonic Green's function ( $\log r$ in
2D and $1/r$ in 3D) or one of integrating it against a smooth density.
Instead, we have fully switched to the heat potential
formulation, involving only $C^\infty$ functions and the NUFFT 
for the history part coupled with
asymptotic expansions for the local part. Those expansions are robust and stable
independent of distance from the boundary.
Moreover, in contrast to earlier schemes, we can easily achieve
any desired accuracy by combining our asymptotic formulas with a telescoping, 
local quadrature, described briefly in the next section.
\end{remark}
\subsection{Computing the local part of volume and layer potentials with 
 arbitrary precision} \label{s:dyadicref}
The scheme described in \cref{s:local} can achieve convergence rates
of up to fifth order accuracy, assuming $\delta_1 = O(\Delta x^2)$ 
and using asymptotic expansions of order $\delta_1^p$ with $p \leq {5/2}$.
(We typically 
choose to truncated the expansions at order $\delta_1^{3/2}$ to avoid 
the need for high order derivatives of the boundary, as discussed in the 
examples below.)
Suppose, however, that higher precision is desired in the local part. 
Reducing $\delta_1$, unfortunately,
would cause the size of the Fourier transforms to increase, making the 
near history contribution extremely expensive. 
Instead, we combine two ideas from the numerical
analysis of heat potentials: full product integration in time from \cite{li2009sisc}
and the introduction of a second small parameter $\delta^\ast$
for which the asymptotic contribution is guaranteed to have the requested accuracy 
from \cite{wang2019acom}. We illustrate the idea using the single layer potential. 

Suppose that one wishes
to compute $\calS_L[\sigma]$ to some precision $\epsilon < \delta_1^p$.
For this, first 
determine the smallest $J$ such that $[\delta_1/(4^J)]^p \leq \epsilon$,
so that an asymptotic expansion with parameter
$\delta^\ast = \delta_1/(4^J)$ is sufficiently accurate.
If $J=0$, then the local part is already being computed with the desired accuracy. 

Otherwise, following \cite{li2009sisc,wang2019acom}, write
\begin{equation}
\begin{aligned}
\mathcal{S}^\ast_L[\sigma](\bm{x}) &=  
\int_0^{\delta^\ast}\! \int_{\partial \Omega}
\frac{e^{-\|\bm{x-x'}\|^2/4t}}{4 \pi t} \, \sigma(\bm{x'}) \,\mathrm{ds}_{\bm{x'}} \, \mathrm{d}t
+ \sum_{j=1}^J
\int_{\delta_1/(4^j)}^{\delta_1/(4^{j-1})}\! \int_{\partial \Omega}
\frac{e^{-\|\bm{x-x'}\|^2/4t}}{4 \pi t} \, \sigma(\bm{x'}) \,\mathrm{ds}_{\bm{x'}} \, \mathrm{d}t \\
&=  
\int_0^{\delta^\ast}\! \int_{\partial \Omega}
\frac{e^{-\|\bm{x-x'}\|^2/4t}}{4 \pi t} \, \sigma(\bm{x'}) \,\mathrm{ds}_{\bm{x'}} \, \mathrm{d}t
+ 
\sum_{j=1}^J
\int_{\partial \Omega} 
{\cal K}_j(\|\bm{x-x'}\|^2/\delta_1)
\, \sigma(\bm{x'}) \,\mathrm{ds}_{\bm{x'}} \, ,
\end{aligned}
\end{equation}
where the {\em difference kernel} ${\cal K}_j(r)$ is given by
\[ {\cal K}_j(r) =  \frac{1}{4\pi}
\left[ E_1( 4^{j-2} r) - 
E_1( 4^{j-1} r) \right]  \, 
\]
from \eqref{eiformula}.
The first term (the integral over the interval $[0,\delta^\ast]$)
is evaluated to the desired precision using asymptotics.
Each of the terms involving difference kernels
is infinitely differentiable but 
more sharply peaked with 
each refinement level. In fact, it is easy to verify that
the range of the kernel to any fixed precision decreases by a factor of two
when $j \rightarrow j+1$, requiring a doubling of 
the number of quadrature points over the range of interest.
Taking the perspective of a target point $\bm{x'}$, however,
the range of non-negligible
interactions has shrunk by a factor of two at the same time. 
Combining these facts, it is easy to see that the 
number of quadrature nodes that make a non-negligible contribution to a target
remains the same and thus, the work for each target is 
constant at every level using only a smooth quadrature rule
and direct summation (without the NUFFT or any other acceleration).
This idea extends naturally to the double layer and volume heat 
potential (and is used in a slightly different form 
in the DMK-based box code of \cite{dmk}).

Since $\sqrt{\delta_1} \approx \Delta x$, the
telescoping series correction is entirely local for any chunk, and the 
only aspect of the procedure that requires some care is that each
chunk should be processed independently in order to avoid excessive storage.
We omit further details in the present paper for the sake of simplicity.
In our third order accurate experiments on quasi-uniform triangulations,
we determined that the error constants stemming from layer potential
asymptotics generally exceed those from volume potential asymptotics.
Thus, we apply this telescoping correction only to the local layer 
potentials.
Carrying out three steps of 
dyadic refinement for $\calS_L[\sigma]$ and $\calD_L[\mu]$ clearly reduces the 
associate error constant in the 
layer potential part
by a factor of $(1/64)^p$, which is more than sufficient in all of our tests.
Detailed timings will be presented in \cref{s:numericalresults}.
\section{The boundary integral correction} \label{s:bie}
When solving an interior boundary value problem in a simply
connected domain with Dirichlet conditions,
standard potential theory \cite{guenther1988partial,kelloggpot,mikhlin,stakgold} 
suggests a representation for the solution $u$ of the form \eqref{dir_rep},
repeated here:
\[
u(\bm{x}) = \mathcal{V}[f](\bm{x}) + \mathcal{D}[\mu](\bm{x})
\quad \text{for }\bm{x}\in\Omega .
\]
Taking the limit as $\bm{x}$ approaches the boundary and using the jump relations
\eqref{dlpjump}, we obtain the integral equation
\begin{equation}
-\frac{1}{2} \mu(\bm{x'}) + \mathcal{D}[\mu](\bm{x'}) =  g(\bm{x'}) - \mathcal{V}[f](\bm{x'})
\quad \text{for }\bm{x'} \in \partial\Omega.
\label{dir_inteq}
\end{equation}

When $\Omega$ is an exterior domain, in addition to the Dirichlet
boundary conditions, the behavior of the solution at infinity
must be specified \cite{GREENBAUM1993267,mikhlin,stakgold}.
More precisely, the coefficient $A$ must be given corresponding
to growth at infinity of the order  
$A \log \| \bm{x} \|$. (Bounded solutions correspond to setting $A=0$.)
A standard representation for the solution is
\begin{equation}
    u(\bm{x}) = \mathcal{V}[f](\bm{x}) + \mathcal{D}_E[\mu](\bm{x})
+ A \log \| \bm{x} - \bm{x}_0 \|
\quad \text{for }\bm{x}\in\Omega,
\label{dir_repext}
\end{equation}
where $\bm{x}_0$ is some point in the interior
of the closed curve $\partial\Omega$, and 
\[
 \mathcal{D}_E[\mu](\bm{x}) \equiv 
\int_{\partial\Omega}
\left[ \frac{\partial G}{\partial {\nu}_{\bm{x'}}}
(\bm{x}-\bm{x'})+ 1 \right] 
\mu(\bm{x'})\,\mathrm{ds}_{\bm{x'}}
\quad \text{for }\bm{x}\in\Omega.
\]

For the Neumann problem, with boundary condition \eqref{neu_rep},
Green's representation formula 
\eqref{neu_rep} leads to the integral equation
\begin{equation}
 \frac12 u(\bm{x'}) + \mathcal{D}[u](\bm{x'})
= \mathcal{V}[f](\bm{x'}) + 
\mathcal{S}[g] (\bm{x'})
\quad \text{for }\bm{x'} \in \partial\Omega.
\end{equation}

These boundary integral equations are classical
\cite{atkinson_1997,KressRainer2014LIE} and there are numerous
fast solvers that have been developed for their solution, described for example
in the text \cite{martinsson2019fast}.
In the present paper, we make use of GMRES iteration coupled with
fast multipole acceleration \cite{fmm2dlib}, as in 
\cite{GREENBAUM1993267}. 
Since this is a mature subject, and we are using standard techniques, we omit further details.
\section{Numerical results}\label{s:numericalresults}
We have implemented the algorithms above in Fortran 77/90, using the {\tt fmm2d} library 
\cite{fmm2dlib} combined with GMRES iteration to solve the relevant
boundary integral equations and the {\tt FINUFFT} library \cite{finufftlib} to 
compute the nonuniform fast Fourier transforms. The timings below were obtained 
using an AMD Ryzen 7 PRO 6850U CPU with 16 Mb cache, with everything
compiled in single-threaded mode so that we can easily compute the number of points
processed per second per core (a useful figure of merit for linear or nearly linear scaling algorithms).

Given a description of the boundary $\Gamma$, we generate a curvilinear
triangulation using {\tt MeshPy} \cite{meshpylib} as an interface to 
{\tt Triangle} \cite{trianglepaper,trianglelib}. 
In the interior of each triangular element, we use six Vioreanu-Rokhlin nodes \cite{vioreanu2014sisc}, obtained from \cite{modepylib}. This is 
sufficient for integrating polynomials of
degree two, resulting in third order accuracy. 
In all experiments, we generate a sequence of up to seven meshes, 
with 
\[ h\in\{0.1, 0.075, 0.05, 0.025, 0.01, 0.0075, 0.005\}, \] 
where $h$ is the length of the longest side amongst all mesh elements. 
Given such a triangulation with quadrature weights $w_i,\ i=1,\dots,N$, 
we define 
$\Delta x^2 = \frac{1}{N}\sum_{i=1}^{N} w_{i}$
and set $\delta = 3 \Delta x^2$.

In our tests, all boundary curves are assumed to take the form 
\begin{equation}\label{eq:bdryparam}
  \bm{\gamma}(t) = r\left(\sum\limits_{k=0}^{N_{c}}\alpha_{k}\cos(kt) + \sum\limits_{k=1}^{N_{s}}\beta_{k}\sin(kt)\right) \begin{bmatrix}\cos(st)\\\sin(st)\end{bmatrix} + \begin{bmatrix}x_{0}\\y_{0}\end{bmatrix}\quad {\rm for}\ t\in[0,2\pi),
\end{equation}
parametrized by $r,\{ \alpha_{k} \},\{ \beta_{k} \},x_{0},y_{0},s$.
We set the tolerance for NUFFT to $10^{-6}$, 
the tolerance for GMRES to $10^{-8}$, and the tolerance for the FMM to $10^{-6}$.
The tolerance $\epsilon$ for the history part \eqref{historytol} 
is also set to $10^{-6}$.

Errors are computed over all quadrature nodes in the 
volumetric discretization  in $\ell_{\infty}$ and $\ell_{2}$ norms.
Since, for interior Neumann problems, the solution is only unique up to an arbitrary 
constant, we define the error as
\begin{equation}
  e_{\infty} = \frac{\| (\tilde{u}-\tilde{u}_{c}) -(u-u_{c}) \|_{\infty}}{\|u-u_{c}\|_{\infty}},\quad   e_{2} = \frac{\|(\tilde{u}-\tilde{u}_{c})-(u-u_{c})\|_{2}}{\|u-u_{c}\|_{2}},
\end{equation}
where $u_{c} = u(\bm{x}_{c})$ for a randomly chosen point $\bm{x}_{c}$
and $\tilde{u}$ is our approximate computed solution.
For Dirichlet boundary conditions, we may set $\tilde{u}_c = u_{c}=0$.

In our first experiment, we verify the convergence rates for our 
asymptotic expansions.
Our second example is an interior Neumann problem and our third example 
is an exterior Dirichlet problem. For our last example, we study the
effect of poor mesh quality (degenerate triangles, high aspect
ratio elements and gaps) in the solution of an interior Dirichlet problem.
\subsection{Validation of high order asymptotic expansions}\label{ss:exvalidation}

For the single and double layer potentials, we consider the 
domain $\Omega$ with boundary given by \cref{eq:bdryparam}, 
with nonzero parameters $r=1$, $\alpha_{0}=5/6$, $\beta_{3} = 1/30$, $\alpha_{5} = 1/15$, $\alpha_{6} = 1/30$, $\alpha_{8} = 1/30$, $\alpha_{10} = 1/30$, and $s=1$.
(See \cref{fig:exintneu_omega_and_sol}.)
We assume the layer potential densities are given by
\[
  \sigma(t) = \mu(t) = 1 + \sin(2t) + \cos(4t) \quad \text{for } t\in \halfopen{0}{2\pi}. 
\]

We measure the error in evaluating 
$\mathcal{S}_L[\sigma]$ and $\mathcal{D}_L[\mu]$ at $10000$ random locations in the box $[-1,1]^{2}$, which entirely contains $\bar{\Omega}$. All curve and 
density derivatives that appear in the asymptotic formulas are computed 
analytically. We also use a sixteenth order discretization of the boundary 
$\partial\Omega$, and set all tolerances to fourteen digits of accuracy 
in order to ensure that errors are due exclusively to the asymptotic 
approximations. The results are shown in the left and center panels
of \cref{fig:exval_conv_vol}, where we observe the expected order of convergence from
 \cref{eq:slplocO5} and \cref{eq:dlplocO5}.

\begin{figure}
\centering
\subfloat{
%
%
\definecolor{mycolor1}{rgb}{0.00000,0.44700,0.74100}%
\definecolor{mycolor2}{rgb}{0.85000,0.32500,0.09800}%
\begin{tikzpicture}[scale=0.55]

\begin{axis}[%
scale only axis,
xmode=log,
xmin=0.000001,
xmax=0.003,
xminorticks=true,
xlabel style={font=\Large\color{white!15!black}},
xlabel={$\delta$},
xticklabel style = {font=\large},
extra x ticks={0.002},
extra x tick labels={},
ymode=log,
ymin=1e-12,
ymax=0.001,
yminorticks=true,
ylabel style={font=\Large\color{white!15!black}},
ylabel={Relative error},
yticklabel style = {font=\large},
axis background/.style={fill=white},
xmajorgrids,
xminorgrids,
ymajorgrids,
yminorgrids,
legend style={font=\Large,at={(0.03,0.97)}, anchor=north west, legend cell align=left, align=left, draw=white!15!black},
clip mode=individual,
]
\addplot [color=mycolor1, line width=1.0pt, mark=square*, mark options={solid, mycolor1}, mark size=2.0pt]
  table[row sep=crcr]{%
1.000000e-03 6.965999e-04 \\
5.274997e-04 1.820020e-04 \\
2.782559e-04 5.289530e-05 \\
1.467799e-04 1.325988e-05 \\
7.742637e-05 2.732296e-06 \\
4.084239e-05 8.285891e-07 \\
2.154435e-05 2.349053e-07 \\
1.136464e-05 5.610848e-08 \\
5.994843e-06 1.137074e-08 \\
3.162278e-06 3.068346e-09 \\
};

\addplot [color=mycolor2, dotted, line width=1.0pt, mark=square, mark options={solid, mycolor2}, mark size=2.0pt]
  table[row sep=crcr]{%
1.000000e-03 8.691356e-05 \\
5.274997e-04 2.011701e-05 \\
2.782559e-04 4.696373e-06 \\
1.467799e-04 1.106289e-06 \\
7.742637e-05 2.567208e-07 \\
4.084239e-05 6.054419e-08 \\
2.154435e-05 1.434273e-08 \\
1.136464e-05 3.511387e-09 \\
5.994843e-06 8.152016e-10 \\
3.162278e-06 1.663777e-10 \\
};

\addplot [color=black, dashed, line width=1.0pt]
  table[row sep=crcr]{%
7.742637e-05 5.518275e-06 \\
4.084239e-05 1.535493e-06 \\
2.154435e-05 4.272600e-07 \\
1.136464e-05 1.188876e-07 \\
5.994843e-06 3.308119e-08 \\
};

\addplot [color=mycolor1, line width=1.0pt, mark=*, mark options={solid, mycolor1}, mark size=2.0pt]
  table[row sep=crcr]{%
1.000000e-03 4.717426e-04 \\
5.274997e-04 1.158198e-04 \\
2.782559e-04 2.247010e-05 \\
1.467799e-04 5.795497e-06 \\
7.742637e-05 1.223693e-06 \\
4.084239e-05 1.939333e-07 \\
2.154435e-05 4.498951e-08 \\
1.136464e-05 1.003251e-08 \\
5.994843e-06 1.962563e-09 \\
3.162278e-06 2.810016e-10 \\
};

\addplot [color=mycolor2, dotted, line width=1.0pt, mark=o, mark options={solid, mycolor2}, mark size=2.0pt]
  table[row sep=crcr]{%
1.000000e-03 5.478049e-05 \\
5.274997e-04 1.086689e-05 \\
2.782559e-04 2.014275e-06 \\
1.467799e-04 3.671940e-07 \\
7.742637e-05 6.877565e-08 \\
4.084239e-05 1.228032e-08 \\
2.154435e-05 2.136913e-09 \\
1.136464e-05 3.882554e-10 \\
5.994843e-06 6.840687e-11 \\
3.162278e-06 1.012669e-11 \\
};

\addplot [color=black, dashdotdotted, line width=1.0pt]
  table[row sep=crcr]{%
7.742637e-05 1.201572e-07 \\
4.084239e-05 2.428317e-08 \\
2.154435e-05 4.907507e-09 \\
1.136464e-05 9.917826e-10 \\
5.994843e-06 2.004343e-10 \\
};
\end{axis}

\end{tikzpicture}
\subfloat{
%
%
\definecolor{mycolor1}{rgb}{0.00000,0.44700,0.74100}%
\definecolor{mycolor2}{rgb}{0.85000,0.32500,0.09800}%
\begin{tikzpicture}[scale=0.55]

\begin{axis}[%
scale only axis,
xmode=log,
xmin=0.000001,
xmax=0.003,
xminorticks=true,
xlabel style={font=\Large\color{white!15!black}},
xlabel={$\delta$},
xticklabel style = {font=\large},
extra x ticks={0.002},
extra x tick labels={},
ymode=log,
ymin=1e-10,
ymax=0.01,
yminorticks=true,
ylabel style={font=\Large\color{white!15!black}},
ylabel={Relative error},
yticklabel style = {font=\large},
axis background/.style={fill=white},
xmajorgrids,
xminorgrids,
ymajorgrids,
yminorgrids,
legend style={font=\Large,at={(0.03,0.97)}, anchor=north west, legend cell align=left, align=left, draw=white!15!black},
clip mode=individual,
]
\addplot [color=mycolor1, line width=1.0pt, mark=square*, mark options={solid, mycolor1}, mark size=2.0pt]
  table[row sep=crcr]{%
1.000000e-03 1.167069e-02 \\ 
5.274997e-04 3.423291e-03 \\ 
2.782559e-04 8.917484e-04 \\ 
1.467799e-04 1.939171e-04 \\ 
7.742637e-05 7.029816e-05 \\ 
4.084239e-05 1.748774e-05 \\ 
2.154435e-05 4.062158e-06 \\ 
1.136464e-05 1.517986e-06 \\ 
5.994843e-06 4.576037e-07 \\ 
3.162278e-06 1.000210e-07 \\ 
};

\addplot [color=mycolor2, dotted, line width=1.0pt, mark=square, mark options={solid, mycolor2}, mark size=2.0pt]
  table[row sep=crcr]{%
1.000000e-03 1.045028e-03 \\ 
5.274997e-04 2.663575e-04 \\ 
2.782559e-04 6.385036e-05 \\ 
1.467799e-04 1.496642e-05 \\ 
7.742637e-05 3.819651e-06 \\ 
4.084239e-05 9.642586e-07 \\ 
2.154435e-05 2.337636e-07 \\ 
1.136464e-05 5.984352e-08 \\ 
5.994843e-06 1.495002e-08 \\ 
3.162278e-06 3.186447e-09 \\ 
};

\addplot [color=black, dashed, line width=1.0pt]
  table[row sep=crcr]{%
7.742637e-05 1.199221e-04 \\ 
4.084239e-05 3.336902e-05 \\ 
2.154435e-05 9.285129e-06 \\ 
1.136464e-05 2.583642e-06 \\ 
5.994843e-06 7.189138e-07 \\ 
};
\addplot [color=mycolor1, line width=1.0pt, mark=*, mark options={solid, mycolor1}, mark size=2.0pt]
  table[row sep=crcr]{%
 1.000000e-03 1.224455e-02 \\ 
5.274997e-04 3.027073e-03 \\ 
2.782559e-04 6.410129e-04 \\ 
1.467799e-04 1.431911e-04 \\ 
7.742637e-05 2.968500e-05 \\ 
4.084239e-05 5.639604e-06 \\ 
2.154435e-05 9.383464e-07 \\ 
1.136464e-05 1.179222e-07 \\ 
5.994843e-06 1.465262e-08 \\ 
3.162278e-06 2.722173e-09 \\ 
};

\addplot [color=mycolor2, dotted, line width=1.0pt, mark=o, mark options={solid, mycolor2}, mark size=2.0pt]
  table[row sep=crcr]{%
1.000000e-03 9.060404e-04 \\ 
5.274997e-04 1.867084e-04 \\ 
2.782559e-04 3.828869e-05 \\ 
1.467799e-04 7.214800e-06 \\ 
7.742637e-05 1.222515e-06 \\ 
4.084239e-05 2.122962e-07 \\ 
2.154435e-05 3.691164e-08 \\ 
1.136464e-05 5.502208e-09 \\ 
5.994843e-06 7.691716e-10 \\ 
3.162278e-06 1.422768e-10 \\ 
};

\addplot [color=black, dashdotdotted, line width=1.0pt]
  table[row sep=crcr]{%
7.742637e-05 2.110213e-07 \\ 
4.084239e-05 4.264634e-08 \\ 
2.154435e-05 8.618613e-09 \\ 
1.136464e-05 1.741778e-09 \\ 
5.994843e-06 3.520047e-10 \\ 
};
\end{axis}

\end{tikzpicture}
\subfloat{
%
%
\definecolor{mycolor1}{rgb}{0.00000,0.44700,0.74100}%
\definecolor{mycolor2}{rgb}{0.85000,0.32500,0.09800}%
\begin{tikzpicture}[scale=0.55]

\begin{axis}[%
scale only axis,
xmode=log,
xmin=0.000001,
xmax=0.003,
xminorticks=true,
xlabel style={font=\Large\color{white!15!black}},
xlabel={$\delta$},
xticklabel style = {font=\large},
extra x ticks={0.002},
extra x tick labels={},
ymode=log,
ymin=1e-09,
ymax=1,
yminorticks=true,
ylabel style={font=\Large\color{white!15!black}},
ylabel={Relative error},
yticklabel style = {font=\large},
axis background/.style={fill=white},
xmajorgrids,
xminorgrids,
ymajorgrids,
yminorgrids,
legend style={font=\Large,at={(0.03,0.97)}, anchor=north west, legend cell align=left, align=left, draw=white!15!black},
clip mode=individual,
]
\addplot [color=mycolor1, line width=1.0pt, mark=square*, mark options={solid, mycolor1}, mark size=2.0pt]
  table[row sep=crcr]{%
   0.000961644406940   0.052418548681328\\
   0.000541436243713   0.022276932315192\\
   0.000237951818000   0.005850522940774\\
   0.000060815169185   0.000480222684418\\
   0.000009763223090   0.000013409773430\\
   0.000005494895537   0.000004304253251\\
   0.000002443885747   0.000000857792870\\
};

\addplot [color=mycolor2, dotted, line width=1.0pt, mark=square, mark options={solid, mycolor2}, mark size=2.0pt]
  table[row sep=crcr]{%
   0.000961644406940   0.011835289670045\\
   0.000541436243713   0.005030375486929\\
   0.000237951818000   0.001294998964960\\
   0.000060815169185   0.000105142137848\\
   0.000009763223090   0.000002917722176\\
   0.000005494895537   0.000000930302648\\
   0.000002443885747   0.000000184887199\\
};

\addplot [color=black, dashed, line width=1.0pt]
  table[row sep=crcr]{%
   0.000237951818000   0.018379635559094\\
   0.000060815169185   0.001200556711046\\
   0.000009763223090   0.000030941778108\\
};

\addplot [color=mycolor1, line width=1.0pt, mark=*, mark options={solid, mycolor1}, mark size=2.0pt]
  table[row sep=crcr]{%
   0.000961644406940   0.080967361355944\\
   0.000541436243713   0.019876793653893\\
   0.000237951818000   0.002274445795928\\
   0.000060815169185   0.000048753018903\\
   0.000009763223090   0.000000331566570\\
   0.000005494895537   0.000000088167434\\
   0.000002443885747   0.000000028095591\\
};

\addplot [color=mycolor2, dotted, line width=1.0pt, mark=o, mark options={solid, mycolor2}, mark size=2.0pt]
  table[row sep=crcr]{%
   0.000961644406940   0.017558435466307\\
   0.000541436243713   0.004308001782118\\
   0.000237951818000   0.000498894356057\\
   0.000060815169185   0.000010624205802\\
   0.000009763223090   0.000000056767682\\
   0.000005494895537   0.000000015391968\\
   0.000002443885747   0.000000003995063\\
};

\addplot [color=black, dashdotdotted, line width=1.0pt]
  table[row sep=crcr]{%
   0.000237951818000   0.00003690917110281\\
   0.000060815169185   0.00000121882547258\\
   0.000009763223090   0.00000001258620371\\
};

\end{axis}

\end{tikzpicture}
\caption{Relative errors as functions of $\delta$ for 
$\mathcal{S}_L[\sigma]$ (left), $\mathcal{D}_L[\mu]$ (center) and
$\mathcal{V}_L[f]$ (right) using asymptotic expansions of order 
$O\!\left(\delta^{p/2}\right)$ with $p = 4,5$.
Solid lines with solid markers are the relative $\ell_{\infty}$ errors, dotted lines with open markers are the relative $\ell_{2}$ errors. Square markers 
($\mathsmaller{\blacksquare}$, $\mathsmaller{\square}$) correspond to $p=4$, 
while round markers ($\bullet$, $\circ$) correspond to $p=5$. 
The dashed and dot-dash lines, without markers, are reference curves
with convergence rate $\delta^{p/2}$ for $p=4, 5$, respectively.}
\label{fig:exval_conv_vol}
\end{figure}
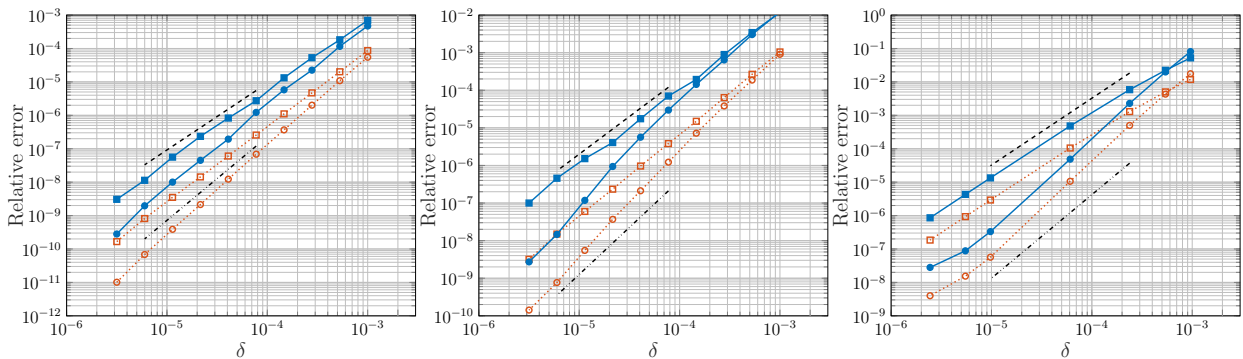

For the volume potential, we solve the interior Dirichlet problem in the same
domain $\Omega$ with volumetric data
\begin{equation}
  f(\bm{x}) = -50\sin(5(x_{1}+x_{2}))+2/9+1000(1000x_{1}^{2}-1)e^{-500x_{1}^{2}}
\quad \text{for }\bm{x}\in\Omega.
\end{equation}

The volume potential $\mathcal{V}[f](\bm{x})$ is computed using the 
near and far history and $\mathcal{V}_{L}[f](\bm{x})$ on a sequence of seven
meshes as indicated above.
In order to make the double layer potential error negligible, we use the 
technique described in \cite{HelsingOjala} with sixteenth order accuracy.
The errors are computed at $5000$ random points in $\Omega$ and 
the results plotted in \cref{fig:exval_conv_vol} (right panel), where we observe the expected orders of convergence from \cref{eq:vplocO5} in the errors as functions of $\delta$.

\subsection{An interior Neumann problem}\label{ss:exintneu}

For the same domain, we consider the exact solution
\begin{equation}\label{eq:neusolana}
  u(\bm{x}) =  \frac{(x_{1}-a)}{(x_{1}-a)^{2} + (x_{2}-b)^{2}} +\frac{x_{1}^{2}}{9}-x_{2} + 8 + e^{-5x_{1}^2} + u_{c} \quad \text{for }\bm{x}\in\Omega,
\end{equation}
where $u_{c}$ is an additive constant, $a = 1.1$, and $b = 1.3$. 

The function $u(\bm{x})$ satisfies the Poisson equation with source density
\begin{equation}\label{eq:exintneusourcedens}
 f(\bm{x}) =\frac{2}{9} + (100x_{1}^{2}-10)e^{-5 x_{1}^2}
\quad \text{for }\bm{x}\in\Omega,
\end{equation}
and we may compute the Neumann boundary data 
$g(\bm{x})=\bm{\nu}(\bm{x})\cdot\nabla u(\bm{x})$ analytically.

In \cref{fig:exintneu_omega_and_sol}, we show a quasi-uniform mesh on $\Omega$ 
for $h\approx 0.1$ together with a plot of the solution $u$, 
given by \cref{eq:neusolana} with $u_{c} = 0$.
\begin{figure}[h]
\centering 
\subfloat{\includegraphics[scale=0.27]{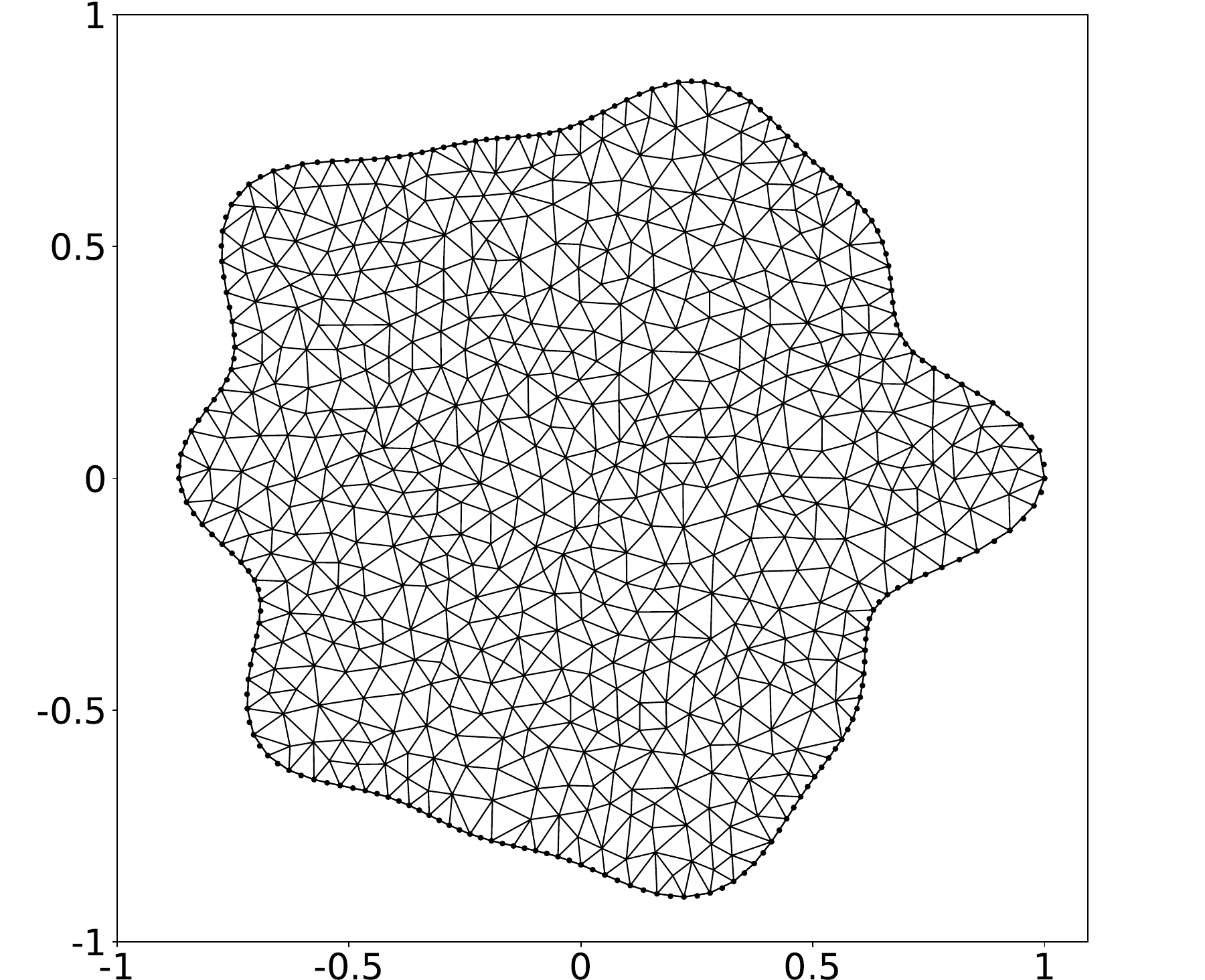}}
\subfloat{\includegraphics[trim={4.5cm 8.8cm 5cm 8cm},clip,scale=0.65]{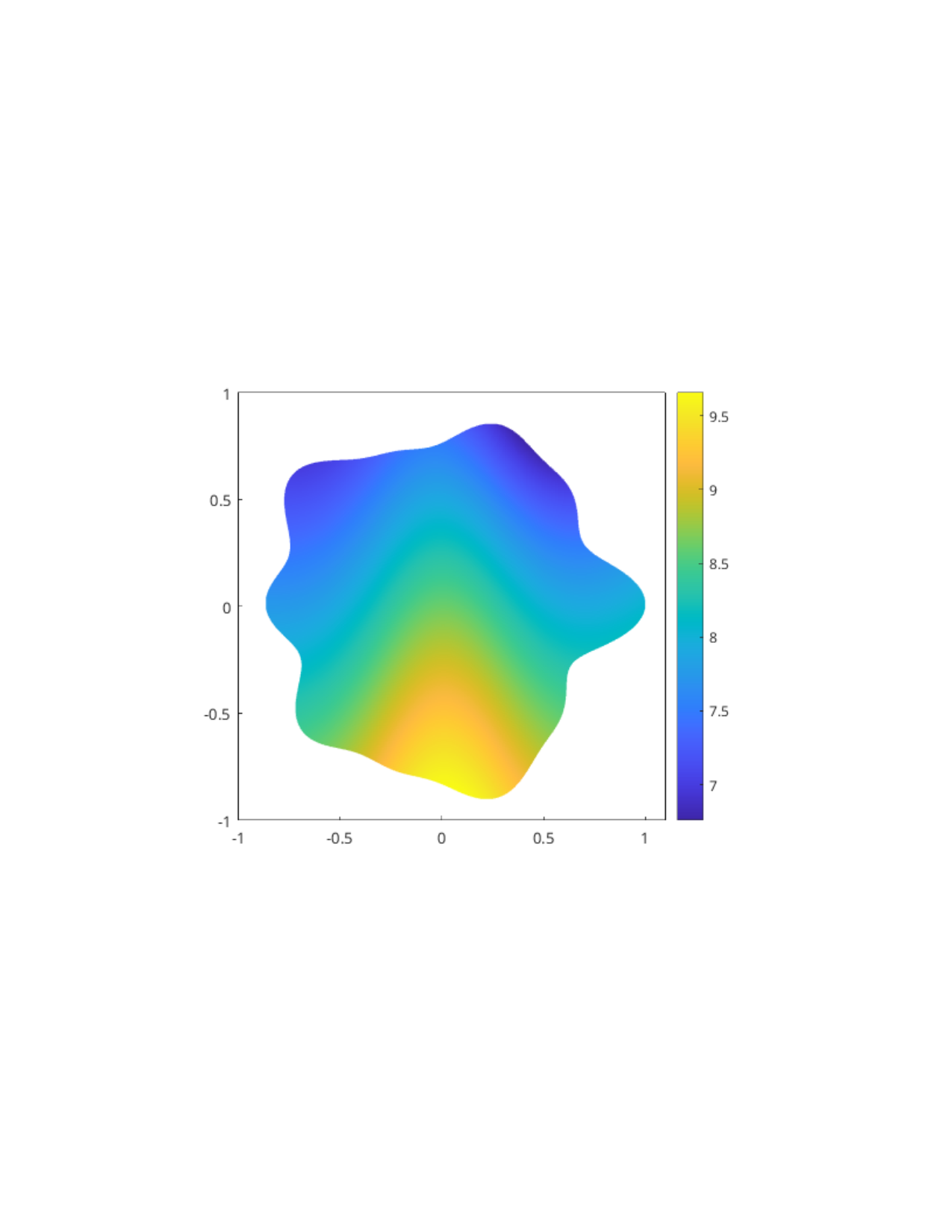}}
\caption{A volumetric mesh for the domain $\Omega$ 
in \cref{ss:exintneu} (left).
There are $1142$ triangles in the discretization, with $h\approx 0.1$ and
$208$ nodes along the boundary $\partial\Omega$, marked with black dots. 
The solution to the interior Neumann problem \cref{eq:neusolana}, 
with $u_{c} = 0$ is plotted on the right.}
\label{fig:exintneu_omega_and_sol}
\end{figure}

We solve the interior Neumann problem for a sequence of seven meshes,
using three levels of dyadic refinement to improve the accuracy of the local
parts of the layer potentials, as described in \cref{s:dyadicref}.
In the asymptotic formulas, we include terms up to $O(\delta)$. The
resulting errors are of the order $O\!\left(\delta^{3/2}\right)$, 
which is equivalent to third order accuracy with respect to $\Delta x$.

The errors in the approximate solutions are plotted 
in \cref{fig:exintneu_conv_and_time} (left panel) as functions of 
$\Delta x$. The convergence rate is clearly third order, until about 
five digits of accuracy are obtained. (The reader may note that
the prescribed tolerances were set to six digits of accuracy. 
We lose one digit of accuracy due to the indeterminacy of the Neumann
problem. Our computed solution happened to be ten times larger than the exact 
solution with $u_c = 0$.)
In \cref{fig:exintneu_conv_and_time} (right panel) we plot the total time for solving and evaluating the solution at all 
interior nodes. These timings do not include mesh generation, but include all 
other steps for solving the Poisson equation. For 
$N_{V} = 674664$, $1198728$, $2695248$ (the three rightmost data points), 
the throughput is $8.3\times 10^{5}$,  $8.7\times 10^{5}$, and 
$8.4\times 10^{5}$ points per second per core.
For problem sizes in this range,
about 80\% of the time is spent in evaluating 
$\mathcal{V}_{NH}[f]$,$\mathcal{V}_{FH}[f],
\mathcal{D}_{NH}[\mu],$ and $\mathcal{D}_{FH}[\mu]$. 
About 10\% of the time is spent in the dyadic refinement
scheme of \cref{s:dyadicref} and another 10\% of the time is spent
solving the boundary
integral equation. The cost of evaluating the asymptotic expansions is negligible.

\begin{figure}[h]
\centering 
\subfloat{
%
%
\definecolor{mycolor1}{rgb}{0.00000,0.44700,0.74100}%
\definecolor{mycolor2}{rgb}{0.85000,0.32500,0.09800}%
\begin{tikzpicture}[scale=0.78]

\begin{axis}[%
scale only axis,
xmode=log,
xmin=0.0008,
xmax=0.03,
xminorticks=true,
xlabel style={font=\Large\color{white!15!black}},
xlabel={$\Delta x$},
xticklabel style = {font=\large},
extra x ticks={0.02},
extra x tick labels={},
ymode=log,
ymin=1e-05,
ymax=0.01,
yminorticks=true,
ylabel style={font=\Large\color{white!15!black}},
ylabel={Relative error},
yticklabel style = {font=\large},
axis background/.style={fill=white},
xmajorgrids,
xminorgrids,
ymajorgrids,
yminorgrids,
legend style={font=\Large,at={(0.03,0.97)}, anchor=north west, legend cell align=left, align=left, draw=white!15!black},
clip mode=individual,
]
\addplot [color=mycolor1, line width=1.0pt, mark=*, mark options={solid, mycolor1}, mark size=2.0pt]
  table[row sep=crcr]{%
0.0179038581218315	0.00168468687322798\\
0.0134342378981593	0.000786364629800699\\
0.00890602451526748	0.000261469304831535\\
0.00450241302656925	3.39539657741937e-05\\
0.00180399769861289	1.61195279037501e-05\\
0.00135337793900323	1.42419147251461e-05\\
0.000902567771538386	1.35483449505379e-05\\
};
\addlegendentry{$e_{\infty}$}

\addplot [color=mycolor2, dotted, line width=1.0pt, mark=o, mark options={solid, mycolor2}, mark size=2.0pt]
  table[row sep=crcr]{%
0.0179038581218315	0.00220106696893435\\
0.0134342378981593	0.00108243317642174\\
0.00890602451526748	0.000336406908541728\\
0.00450241302656925	3.2562369474738e-05\\
0.00180399769861289	1.47609590051477e-05\\
0.00135337793900323	1.2861557674808e-05\\
0.000902567771538386	1.23606760045937e-05\\
};
\addlegendentry{$e_{2}$}

\addplot [color=black, dashed, line width=1.0pt]
  table[row sep=crcr]{%
0.0179038581218315	0.00426996571900406\\
0.0134342378981593	0.0018039463170365\\
0.00890602451526748	0.000525576773152868\\
0.00450241302656925	6.79079315483874e-05\\
};
\addlegendentry{$O(\Delta x ^{3})$}

\end{axis}

\end{tikzpicture}
\hspace{0.15in}
\subfloat{
%
%
\begin{tikzpicture}[scale=0.78]

\begin{axis}[%
scale only axis,
xmode=log,
xmin=1000,
xmax=10000000,
xminorticks=true,
xlabel style={font=\Large\color{white!15!black}},
xlabel={$N_{V}$},
xticklabel style = {font=\large},
ymode=log,
ymin=0.01,
ymax=10,
yminorticks=true,
ylabel style={font=\Large\color{white!15!black}},
ylabel={Time (sec)},
yticklabel style = {font=\large},
axis background/.style={fill=white},
scaled ticks=true
]
\addplot [color=black, line width=1.0pt, mark=o, mark options={solid, black}, forget plot]
  table[row sep=crcr]{%
6852	0.0276900000002342\\
12168	0.035783000000265\\
27684	0.0613709999998946\\
108312	0.184366999999838\\
674664	0.813786000000164\\
1198728	1.38413400000013\\
2695248	3.2037590000001\\
};
\end{axis}

\end{tikzpicture}
\caption{For the Neumann problem in \cref{ss:exintneu},
the left-hand plot
shows the relative $\ell_{\infty}$ error (solid line with solid markers) and
the relative $\ell_{2}$ error (dotted line with open markers) using our solver.
The dashed line is for reference, with slope corresponding to 
a convergence of order $O\!\left(\Delta x ^{3}\right)$.
The right-hand plot shows the total time for all steps (excluding mesh generation) for solving and evaluating the solution at all volumetric quadrature nodes.}
\label{fig:exintneu_conv_and_time}
\end{figure}
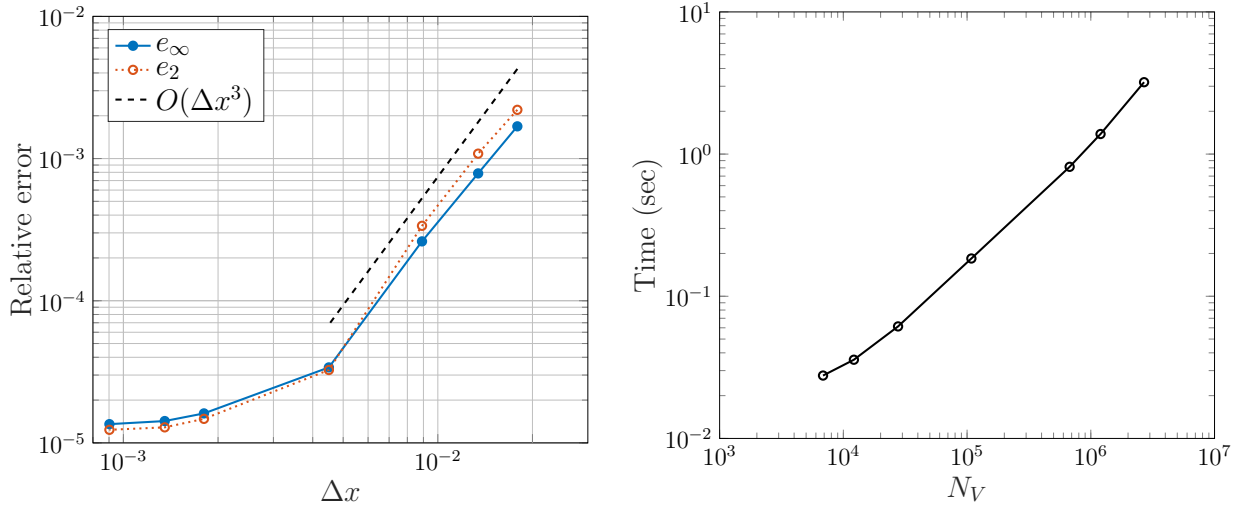
\subsection{An exterior Dirichlet problem}\label{ss:exextdir}
In our next example, we let
$\partial\Omega$ be defined by \cref{eq:bdryparam}, with nonzero coefficients $r=1$, $\alpha_{0}=1$, $\alpha_{5} = 3/10$, and $s=1$. (See \cref{fig:exintext_omega_and_sol}.)

In $\mathbb{R}^{2}\setminus \bar{\Omega}$, we let the source density be given by
\begin{equation}\label{eq:exextsourcedens}
f(\bm{x}) = 4\sum\limits_{i=1}^{3}e^{-\frac{(x_{1}-a_{i})^2 + (x_{2}-b_{i})^2}{\sigma_{i}}}\left(\frac{(x_{1}-a_{i})^2 + (x_{2}-b_{i})^2}{\sigma_{i}^{2}}-\frac{4}{\sigma_{i}}\right),
\end{equation}
with $a_{1}= 1/2$, $b_{1}=-1/2$, $\sigma_{1}=1/20$, $a_{2}= 1/2$, $b_{2}=1/2$, $\sigma_{2}=1/40$, $a_{3}= -3/10$, $b_{3}=0$, and $\sigma_{3}=1/15$. On the boundary 
we assume $u(\bm{x}) = g(\bm{x})$, corresponding to the exact solution
\begin{equation}\label{eq:exextsolana}
  u(\bm{x}) = \sum\limits_{i=1}^{3}e^{-\frac{(x_{1}-a_{i})^2 + (x_{2}-b_{i})^2}{\sigma_{i}}} + 10\log((x_{1}-a_{4})^2 + (x_{2}-b_{4})^2) \, ,
\end{equation}
with $a_{4}=3/10$, and $b_{4} = -0.5$.
Following the discussion in \cref{s:bie},
we represent the solution as
\[
    u(\bm{x}) = \mathcal{V}[f](\bm{x}) + \mathcal{D}_E[\mu](\bm{x})
+ 20 \log \| \bm{x} \|
\quad \text{for } \bm{x}\in \mathbb{R}^{2}\setminus \bar{\Omega}.
\]
The source term is exponentially decaying,
and it is easy to check that $|f(\bm{x})|< 10^{-16}$ for $\bm{x}$ outside the
domain $D = [-1.95,1.95]^{2}\,\setminus \bar{\Omega}$, 
shown in \cref{fig:exintext_omega_and_sol}.

\begin{figure}
\centering 
\subfloat{\includegraphics[scale=0.47]{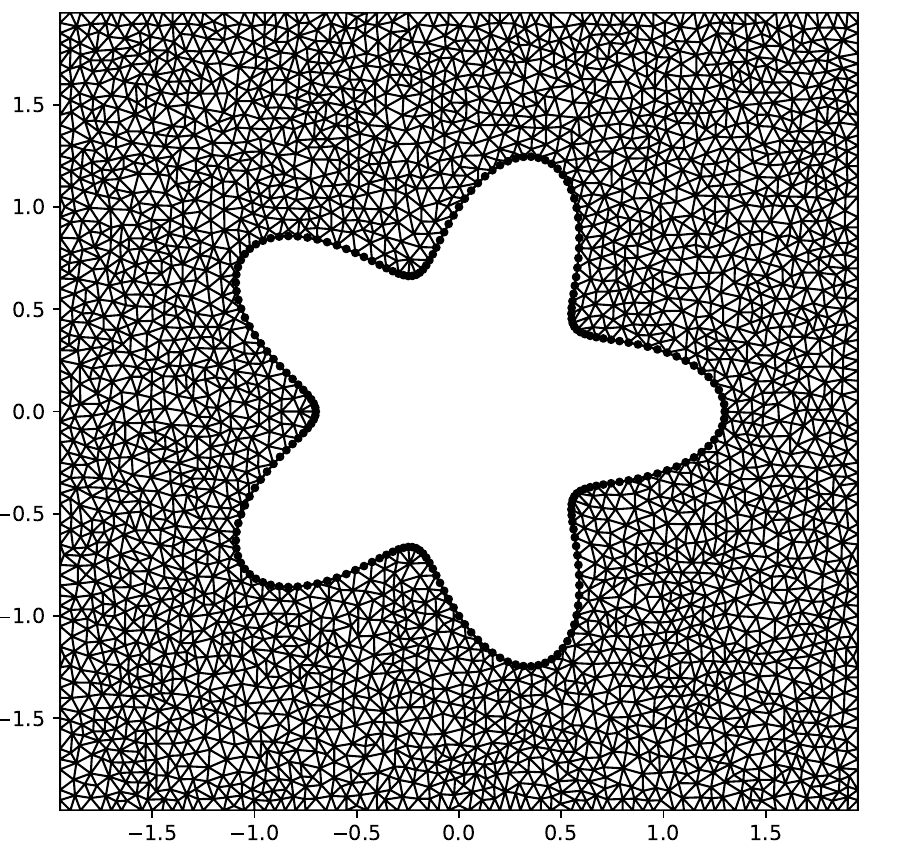}}
\hspace{0.15in}
\subfloat{\includegraphics[trim={4.5cm 8.8cm 5cm 8cm},clip,scale=0.65]{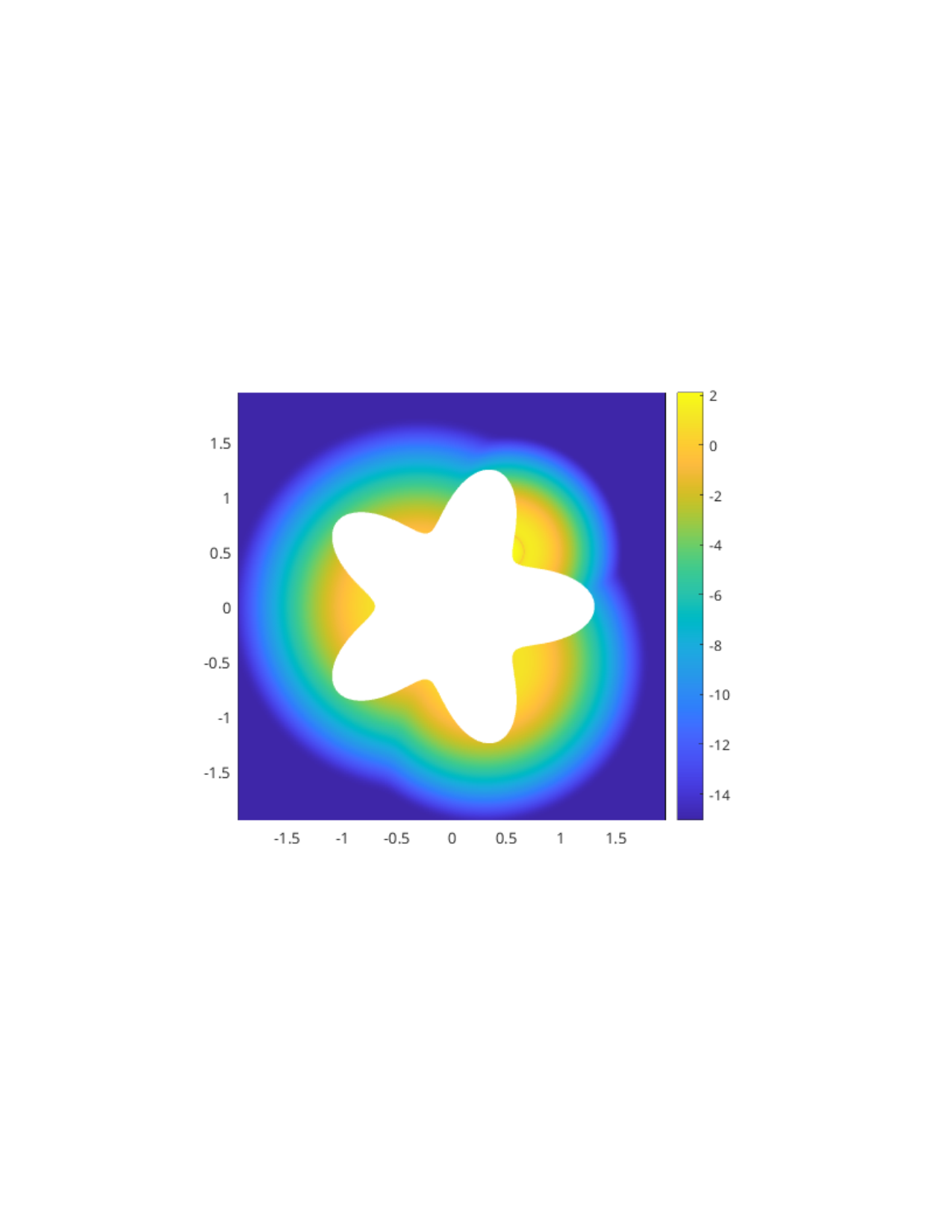}}
\caption{A volumetric mesh for the domain $D$ (left) in
\cref{ss:exextdir}, with
$36318$ elements and $h\approx 0.1$. There are $228$ nodes along the boundary 
$\partial\Omega$, marked with black dots. The right-hand figure shows 
the modulus of the source density \cref{eq:exextsourcedens} on a logartihmic
scale (base 10).}
\label{fig:exintext_omega_and_sol}
\end{figure}

We solve the exterior Dirichlet problem with
six different meshes, corresponding to $h=0.1$, $0.075$, $0.05$, $0.025$, $0.01$,
$0.0075$, using
three levels of dyadic refinement to improve the accuracy of the local
parts of the layer potentials, as described in \cref{s:dyadicref}.
Keeping terms up to $O(\delta)$ in the asymptotic expansions, we
expect third order accuracy in $\Delta x$. This is verified
in \cref{fig:exintext_conv_and_time}. The $l_{\infty}$ error saturates at 
about $3\times 10^{-6}$, which is more or less the requested tolerance.

Note from \cref{fig:exintext_conv_and_time} that the solver operates at about 
one million points per second per core for the four rightmost data points:
$9.6\times 10^{5}$ for $N_{V}=146136$, $1.1\times 10^{6}$ for $N_{V} = 583962$,  $1.1\times 10^{6}$ for $N_{V} = 3657276$, and $9.9\times 10^{5}$ for $N_{V} = 6491154$.

\begin{figure}
\centering 
\subfloat{
%
%
\definecolor{mycolor1}{rgb}{0.00000,0.44700,0.74100}%
\definecolor{mycolor2}{rgb}{0.85000,0.32500,0.09800}%
\begin{tikzpicture}[scale=0.78]

\begin{axis}[%
scale only axis,
xmode=log,
xmin=0.001,
xmax=0.03,
xminorticks=true,
xlabel style={font=\Large\color{white!15!black}},
xlabel={$\Delta x$},
xticklabel style = {font=\large},
extra x ticks={0.02},
extra x tick labels={},
ymode=log,
ymin=1e-08,
ymax=0.001,
yminorticks=true,
ylabel style={font=\Large\color{white!15!black}},
ylabel={Relative error},
yticklabel style = {font=\large},
axis background/.style={fill=white},
xmajorgrids,
xminorgrids,
ymajorgrids,
yminorgrids,
legend style={font=\Large,at={(0.03,0.97)}, anchor=north west, legend cell align=left, align=left, draw=white!15!black},
clip mode=individual,
]
\addplot [color=mycolor1, line width=1.0pt, mark=*, mark options={solid, mycolor1}, mark size=2.0pt]
  table[row sep=crcr]{%
1.812025088816563e-02     3.235942541312149e-04
     1.353105110205596e-02     1.266563760014896e-04\\
     9.033938778499824e-03     3.352874877532253e-05\\
     4.519301316530315e-03     4.641970466386256e-06\\
     1.805871986050080e-03     3.100657586298427e-06\\
     1.355517292747882e-03     3.041786621821009e-06\\
};
\addlegendentry{$e_{\infty}$}

\addplot [color=mycolor2, dotted, line width=1.0pt, mark=o, mark options={solid, mycolor2}, mark size=2.0pt]
  table[row sep=crcr]{%
     1.812025088816563e-02     3.027491775684975e-05\\
     1.353105110205596e-02     1.201849732857813e-05\\
     9.033938778499824e-03     3.404672873843637e-06\\
     4.519301316530315e-03     4.085512800255691e-07\\
     1.805871986050080e-03     7.928012219625174e-08\\
     1.355517292747882e-03     6.315117119996173e-08\\
};
\addlegendentry{$e_{2}$}

\addplot [color=black, dashed, line width=1.0pt]
  table[row sep=crcr]{%
     1.812025088816563e-02     4.488201610345445e-05\\
     1.353105110205596e-02     1.868849550275591e-05\\
     9.033938778499824e-03     5.561746202928457e-06\\
     4.519301316530315e-03     6.962955699579384e-07\\
};
\addlegendentry{$O(\Delta x ^{3})$}
\addplot [color=black, dashed, line width=1.0pt]
  table[row sep=crcr]{%
     1.812025088816563e-02     4.488201610345446e-04\\
     1.353105110205596e-02     1.868849550275592e-04\\
     9.033938778499824e-03     5.561746202928458e-05\\
     4.519301316530315e-03     6.962955699579385e-06\\
     };
\end{axis}

\end{tikzpicture}
\hspace{0.15in}
\subfloat{
%
%
\begin{tikzpicture}[scale=0.78]

\begin{axis}[%
scale only axis,
xmode=log,
xmin=10000,
xmax=10000000,
xminorticks=true,
xlabel style={font=\Large\color{white!15!black}},
xlabel={$N_{V}$},
xticklabel style = {font=\large},
ymode=log,
ymin=0.01,
ymax=10,
yminorticks=true,
ylabel style={font=\Large\color{white!15!black}},
ylabel={Time (sec)},
yticklabel style = {font=\large},
axis background/.style={fill=white},
scaled ticks=true
]
\addplot [color=black, line width=1.0pt, mark=o, mark options={solid, black}, forget plot]
  table[row sep=crcr]{%
     3.631800000000000e+04     5.174600000009377e-02\\
     6.513600000000000e+04     1.471339999989141e-01\\
     1.461360000000000e+05     1.475669999999809e-01\\
     5.839620000000000e+05     5.378430000018852e-01\\
     3.657276000000000e+06     3.156111999999666e+00\\
     6.491154000000000e+06     6.467032999998082e+00\\
};
\end{axis}

\end{tikzpicture}
\caption{For the exterior Dirichlet problem in \cref{ss:exextdir}, the
left-hand plot shows the relative $\ell_{\infty}$ error
(solid line with solid markers) and the relative $\ell_{2}$ error 
(dotted line with open markers). The dashed line is for reference, with slope
corresponding to a convergence rate of 
order $O\!\left(\Delta x ^{3}\right)$. The right-hand plot shows the total time for all steps (excluding mesh generation) for solving and evaluating the solution at all volumetric quadrature nodes.}
\label{fig:exintext_conv_and_time}
\end{figure}
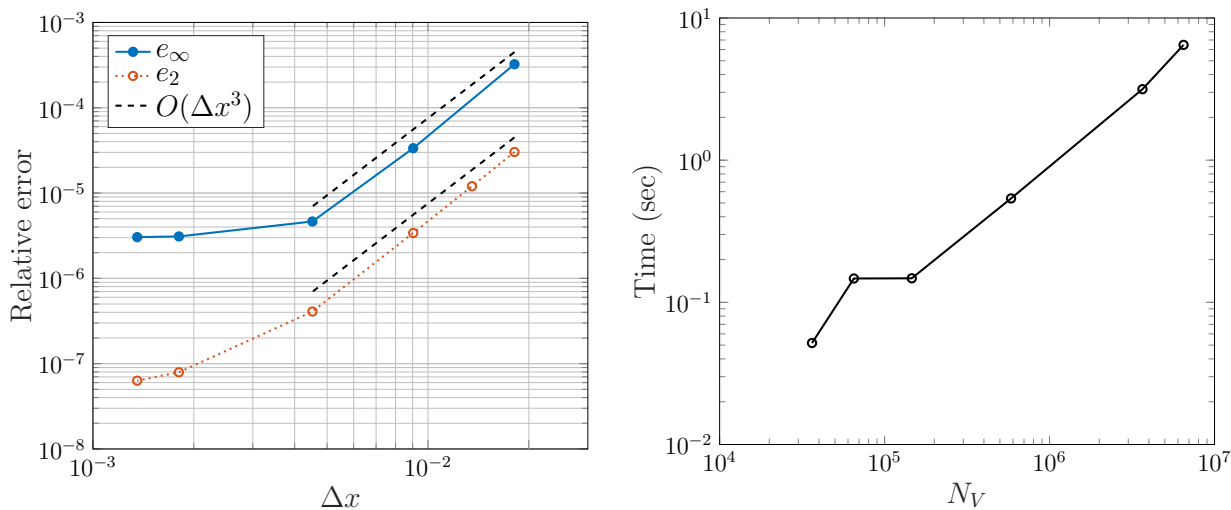

\subsection{Robustness With Respect to Mesh Quality}\label{ss:exrobust}
A major advantage of potential theory in solving the Poisson equation
is that the volume integral $\calV[f]$ expresses a particular solution 
by quadrature. This leads to both robust performance and straightforward error 
analysis.  To see this, 
let $\tilde{f}(\bm{x})$ be a piecewise polynomial approximation of 
${f}(\bm{x})$ and let
${\tilde Q}[\tilde{f}](\bm{x})$ denote the quadrature approximation
of $\mathcal{V}[\tilde{f}](\bm{x})$. Then,
as noted in \cite{langston2011}, 
the total error can be written as
\begin{equation}
\begin{aligned}
e(\bm{x}) &= | \mathcal{V}[f](\bm{x}) - {\tilde Q}[\tilde{f}](\bm{x}) | \\
         & \leq
\left| \int\limits_{\Omega} G(\bm{x}-\bm{x'}) 
(f(\bm{x'}) - \tilde{f}(\bm{x'}))\,\mathrm{d}\bm{x'} \right| +
\left| \int\limits_{\Omega} G(\bm{x}-\bm{x'}) 
\tilde{f}(\bm{x})\,\mathrm{d}\bm{x'} - {\tilde Q}[\tilde{f}](\bm{x}) \right| \\
&= O(\|f(\bm{x})-\tilde{f}(\bm{x}) \|_\infty) + O(\epsilon) \, .
\end{aligned}
\label{volint_error}
\end{equation}
The first term in this estimate is the error in the approximation of the source distribution,
and follows immediately from the fact that $\mathcal{V}$ is a bounded operator.
The second term represents the quadrature error in evaluating the volume integral
for the {\em approximate} right-hand side $\tilde{f}(\bm{x})$.
PDE-based methods require much more complicated estimates that depend on the 
smoothness of the (unknown) solution itself.

As a demonstration, consider the interior Dirichlet problem in the domain
$\Omega$ from \cref{ss:exextdir}. The source density is given by 
\cref{eq:exintneusourcedens}, and the Dirichlet boundary data by 
\cref{eq:neusolana}. 
We solve the problem for two sequences of mesh refinement: 
the first consists of seven meshes, as described in the beginning of 
\cref{s:numericalresults}. 
In the second case, we repeat the refinement process but, 
for every triangle, we pick one of the three sides, say $S$ at random, 
excluding the curved boundary segments, and pick a point $P$ at random along 
the line containing $S$. 
If $P$ ends up outside the triangle, we move it to the closest 
endpoint of $S$. We then split the triangle along the line drawn from 
the random point to the corner opposite $S$. 
This results in a nonconforming mesh, with elements of arbitrarily poor 
aspect ratio, of which about $4\%$ are actually degenerate 
(see \cref{fig:exrobustmesh} 
for an example corresponding to $h=0.1$). 
We use the same relation between $\delta$ and the average mesh spacing
$\Delta x^{2}$ as in our other examples.
\begin{figure}
\centering
\begin{tikzpicture}[spy using outlines={rectangle,red,magnification=4,size=3.5cm, connect spies}]
\node {\pgfimage[interpolate,height=7cm]{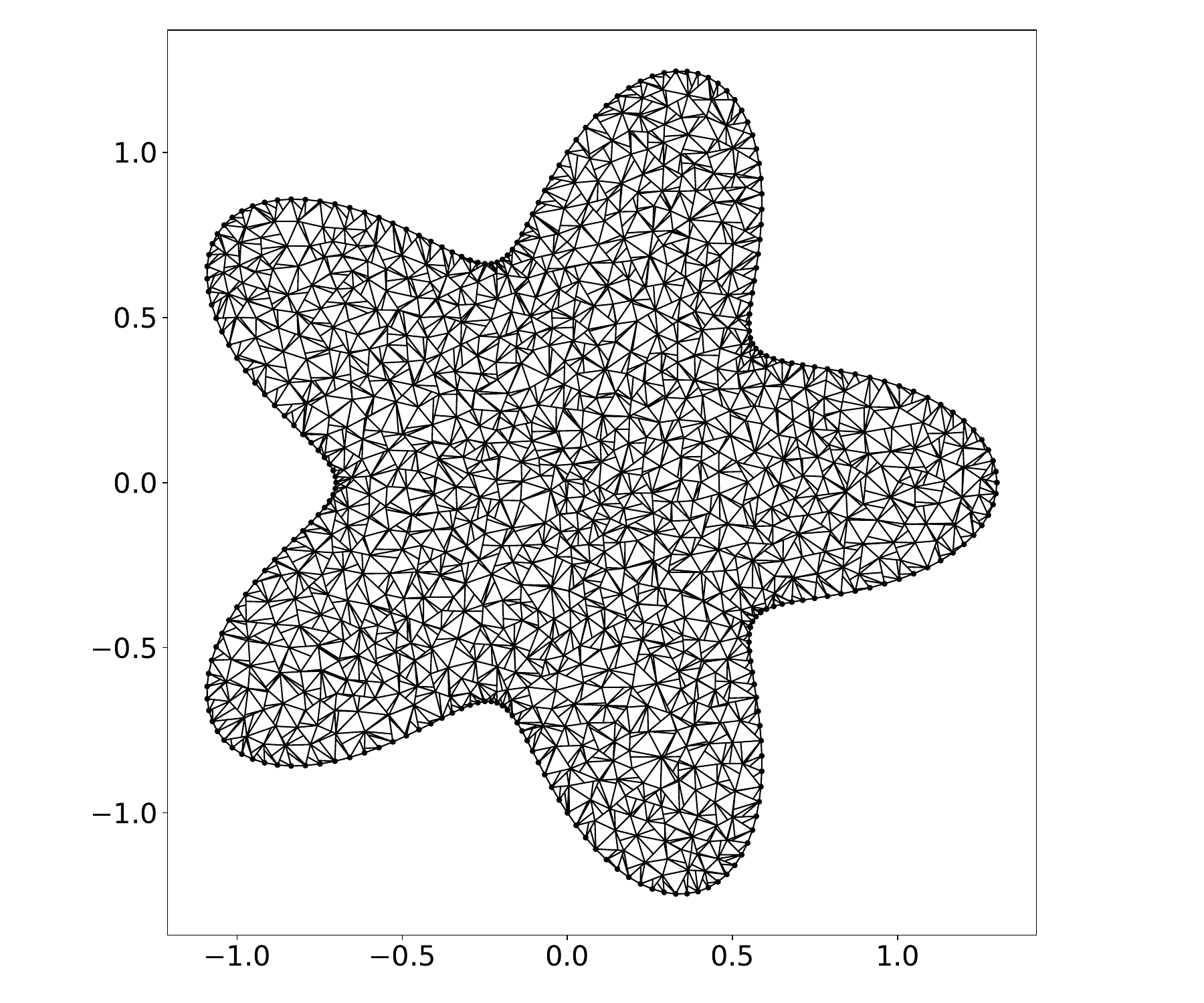}};
\spy on (1,0.3) in node [left] at (7,1.25);
\end{tikzpicture}
\caption{Left: A degenerate mesh (left) obtained from the conforming triangulation
created by {\tt Triangle} \cite{trianglepaper,trianglelib} with $h=0.1$. 
The parametrization of the boundary is given in \cref{ss:exextdir}. 
Of the $3624$ new triangles, $127$ are degenerate. A blow-up of a small region 
is shown on the right.}
\label{fig:exrobustmesh}
\end{figure}

We observe in \cref{fig:exrobust_conv} that mesh distortion had a negligible effect on the error in the solution.
Moreover, removing triangles (creating gaps in the discretization)
induces a small error as well $-$ proportional to the area fraction of
triangles removed.

\begin{figure}[h]
\centering 
%
%
\definecolor{mycolor1}{rgb}{0.00000,0.44700,0.74100}%
\definecolor{mycolor2}{rgb}{0.85000,0.32500,0.09800}%
\begin{tikzpicture}[scale=0.78]

\begin{axis}[%
scale only axis,
xmode=log,
xmin=1e-6,
xmax=0.003,
xminorticks=true,
xlabel style={font=\Large\color{white!15!black}},
xlabel={$\delta$},
extra x ticks={0.002},
extra x tick labels={},
xticklabel style = {font=\large},
ymode=log,
ymin=1e-08,
ymax=0.003,
yminorticks=true,
ylabel style={font=\Large\color{white!15!black}},
ylabel={Relative error},
yticklabel style = {font=\large},
axis background/.style={fill=white},
xmajorgrids,
xminorgrids,
ymajorgrids,
yminorgrids,
legend style={font=\Large,at={(0.03,0.97)}, anchor=north west, legend cell align=left, align=left, draw=white!15!black},
clip mode=individual,
]
\addplot [color=mycolor1, line width=1.0pt, mark=*, mark options={solid, mycolor1}, mark size=2.0pt]
  table[row sep=crcr]{%
     9.060379704474429e-04     5.751918073542443e-05\\
     5.228085044004151e-04     2.837455721621307e-05\\
     2.347745798855494e-04     9.370096943038349e-06\\
     6.011487245790398e-05     1.583517743727053e-06\\
     9.710857461079579e-06     8.639670272700639e-07\\
     5.476650070427052e-06     8.177329070598145e-07\\
     2.439060356200125e-06     1.074911344509009e-06\\
};

\addplot [color=mycolor2, dotted, line width=1.0pt, mark=o, mark options={solid, mycolor2}, mark size=2.0pt]
  table[row sep=crcr]{%
     9.060379704474429e-04     1.056229717482648e-05\\
     5.228085044004151e-04     4.308129059030584e-06\\
     2.347745798855494e-04     1.289057084765961e-06\\
     6.011487245790398e-05     4.727182728941369e-07\\
     9.710857461079579e-06     9.871963679464511e-08\\
     5.476650070427052e-06     9.626944932492544e-08\\
     2.439060356200125e-06     9.465263375682426e-08\\
};
\addplot [color=mycolor1, line width=1.0pt, mark=square*, mark options={solid, mycolor1}, mark size=2.0pt]
  table[row sep=crcr]{%
     9.060379704474449e-04     7.973889932925468e-05\\
     5.228085044004164e-04     3.634742411910547e-05\\
     2.347745798855498e-04     1.150546410135194e-05\\
     6.011487245790397e-05     1.571959278818947e-06\\
     9.710857461079581e-06     1.081793863973289e-06\\
     5.476650070427052e-06     1.079304478629404e-06\\
     2.439060356200125e-06     1.070292899523101e-06\\
};

\addplot [color=mycolor2, dotted, line width=1.0pt, mark=square, mark options={solid, mycolor2}, mark size=2.0pt]
  table[row sep=crcr]{%
     9.060379704474449e-04     1.066619380494925e-05\\
     5.228085044004164e-04     4.307149939723041e-06\\
     2.347745798855498e-04     1.287671339463834e-06\\
     6.011487245790397e-05     4.771734810266285e-07\\
     9.710857461079581e-06     9.840691074091229e-08\\
     5.476650070427052e-06     9.593914360271627e-08\\
     2.439060356200125e-06     9.456357000868728e-08\\
};

\addplot [color=black, dashed, line width=1.0pt]
  table[row sep=crcr]{%
     9.060379704474450e-04     1.720452232222099e-04\\
     5.228085044004167e-04     7.541139705766146e-05\\
     2.347745798855497e-04     2.269340319540672e-05\\
     6.011487245790398e-05     2.940328925231237e-06\\
   };

\addplot [color=black, dashed, line width=1.0pt]
  table[row sep=crcr]{%
     9.060379704474450e-04     1.720452232222099e-05\\
     5.228085044004167e-04     7.541139705766145e-06\\
     2.347745798855497e-04     2.269340319540672e-06\\
};
\end{axis}

\end{tikzpicture}%
\caption{Relative errors, as functions of $\delta$ for numerical experiments in \cref{ss:exrobust}: solid lines with solid markers are the relative $\ell_{\infty}$ errors, dotted lines with open markers are the relative $\ell_{2}$ errors. Round markers ($\bullet$, $\circ$) correspond to errors for the nondegenerate meshes. Square markers ($\mathsmaller{\blacksquare}$, $\mathsmaller{\square}$) correspond to errors for the degenerate meshes. The dashed line without markers is a reference for the expected order of convergence $O\!\left(\delta^{3/2}\right)$.}
\label{fig:exrobust_conv}
\end{figure}
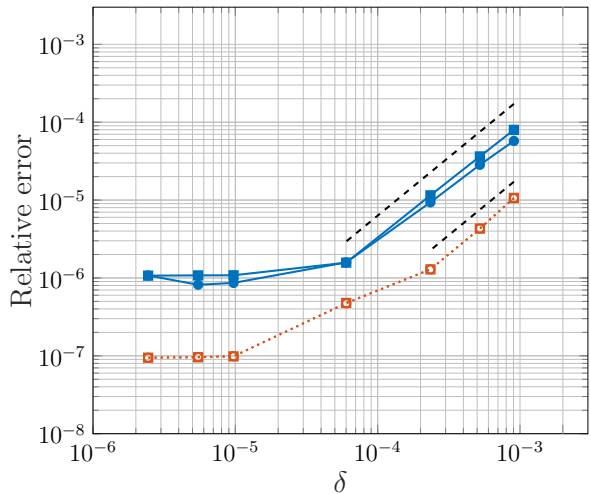
\section{Discussion}\label{s:discussion}
We have presented a new class of fast algorithms for harmonic volume and layer
potentials that is based on evaluating heat potentials to a steady state limit,
separating the calculation into a `` far history" part, a ``near history"
part and a ``local" part.
The first two are evaluated using the NUFFT and the latter is 
computed using asymptotics (or more precisely, a combination of asymptotics and 
a telescoping localized quadrature rule). For layer potentials,
it is closely related to the method of \cite{gs:fhp,strainfpt} and to 
Ewald-like methods, which split the harmonic Green's function into a regularized
far-field kernel with a near field correction 
\cite{klinteberg2017,bealetlupova2024,bealetlupova2024b,beale2016,cortez2001,cortez2005,gs:fhp,dmk,li2009sisc,palssontornberg,shamshirgar2021jcp,strainfpt}. 

Three novel features have been introduced here: 
\begin{enumerate}
\item the construction of accurate asymptotic
expansions for volume potentials when the volumetric source distribution is
smooth within a domain $\Omega$, but discontinuous as a function in the 
ambient space (here $\mathbb{R}^2$),
\item the introduction of several steps of dyadic refinement 
to obtain arbitrary precision in evaluating the local part,
\item the separation of the history part into the ``near" and ``far" components
which provides approximately a three fold speedup in the NUFFT computations
(used also in \cite{dmk}).
\end{enumerate}

The second feature is of significant practical importance, since 
high order asymptotic formulas become more and more ill-conditioned, as they 
involve higher derivatives of the density or the curve itself
(with catastrophic cancellation reducing their effectiveness). 
In practice, we recommend
using the formulas of order no greater than $O(\delta^{3/2})$.
Thus, when higher precision is requested, it is critical to be able to 
carry out $j$ refinement steps to make the value 
$\delta^\ast = \delta/(4^j)$ sufficiently small before invoking the asymptotic 
approximation. This is described in \cref{s:dyadicref} and
related to the hybrid asymptotic/numerical quadrature method of 
\cite{wang2019acom}.

From a user's perspective, perhaps the 
most important feature of the method is the extent to which it is agnostic
to the details of the underlying discretization. All that we require
is the set of weights and nodes for a smooth quadrature rule in the domain and
the set of weights and nodes for a smooth quadrature rule on the boundary.
This is compatible with unstructured triangulations, adaptive 
quad and octtrees, spectral elements,
and various kinds of composite grids
\cite{shravan_hai_volint,composite2012,berger2017,overture,functionintension,saye2015}. 
As demonstrated in our numerical examples, the integral formulation helps
make it insensitive to flaws in the triangulation, including outright degeneracies
and gaps, while maintaining extremely high throughput.

Of potential mathematical interest is that,
by taking the heat potential viewpoint, 
our asymptotic formulas can be used to derive 
the jump properties of the corresponding elliptic layer
potentials across the boundary quite naturally (as seen in 
\eqref{eq:dlplocO5} and \eqref{eq:dlplocO5onsurf}). This provides an
alternative derivation to the ones typically found in the literature
\cite{guenther1988partial,kelloggpot,KressRainer2014LIE,mikhlin,stakgold}.

The algorithm presented above is suitable only for quasi-uniform 
triangulations, as it relies on the NUFFT for speed, and the balance 
between the history and local parts would be broken by multiscale
triangulation.
Fortunately, the tools developed here, including asymptotics 
and local dyadic refinement, can be integrated naturally 
into the DMK framework of \cite{dmk} to develop a fully adaptive 
algorithm. We will report on that
implementation, the extension to other
Green's functions, as well as three-dimensional experiments, at a later date.

\section{Acknowledgements}
The authors would like to thank Fabrizio Falasca at the Courant Institute of Mathematical Sciences, New York University, and Manas Rachh and Dan Fortunato 
at the Center for Computational Mathematics, Flatiron Institute, for helpful discussions. The first author gratefully
acknowledges support from the Knut and Alice Wallenberg Foundation under
grant 2020.0258. 
The work of the second and fourth authors was partially supported by the Office of Naval Research under award N00014-18-1-2307.
\bibliographystyle{siam}
\bibliography{journalnames,references}
\end{document}